\newcommand\F{\mathbb{F}}
\newcommand\FF{\mathcal{F}}
\newcommand\Haar{\mathrm{Haar}}
\newcommand\HH{\mathcal{H}}
\newcommand\PP{\mathcal{P}}
\newcommand\R{\mathbb{R}}
\newcommand\USp{\mathrm{USp}}
\newcommand\U{\mathrm{U}}
\newcommand\ZZ{\mathcal{Z}}
\def\e{\varepsilon}
\DeclareMathOperator{\cosec}{cosec}
\DeclareMathOperator{\Prob}{Prob}
\newtheorem{theorem}{Theorem}[section]
\newtheorem{corollary}[theorem]{Corollary}
\newtheorem{lemma}[theorem]{Lemma}
\newtheorem{proposition}[theorem]{Proposition}
\newtheorem*{Mertensconj}{The Mertens Conjecture}
\newtheorem*{Mertensconjfunctfield}{The Mertens Conjecture for Function Fields}
\newtheorem*{betaMertensconjfunctfield}{The $\beta$-Mertens Conjecture for Function Fields}
\theoremstyle{remark}
\newtheorem{remark}[theorem]{Remark}
\theoremstyle{definition}
\newtheorem{definition}[theorem]{Definition}
\begin{document}

\title{On the Mertens Conjecture for Function Fields}

\author{Peter Humphries}

\email{peterch@math.princeton.edu}

\address{Department of Mathematics, Princeton University, Princeton, New Jersey 08544, USA}

\keywords{Mertens conjecture, function field, M\"{o}bius function, hyperelliptic curve}

\subjclass[2010]{11N56 (primary); 11G20, 11M50 (secondary).}

\thanks{This research was partially supported by an Australian Postgraduate Award.}

\begin{abstract}
We study the natural analogue of the Mertens conjecture in the setting of global function fields. Building on the work of Cha, we show that most hyperelliptic curves do not satisfy the Mertens conjecture, but that if we modify the Mertens conjecture to have a larger constant, then this modified conjecture is satisfied by a positive proportion of hyperelliptic curves.
\end{abstract}

\maketitle

\section{The Mertens Conjecture}

Let $\mu(n)$ denote the M\"{o}bius function, so that for a positive integer $n$,
	\[\mu(n) = \begin{cases}
1 & \text{if $n = 1$,}	\\
(-1)^t & \text{if $n$ is the product of $t$ distinct primes,}	\\
0 & \text{if $n$ is divisible by a perfect square,}
\end{cases}\]
and let
	\[M(x) = \sum_{n \leq x}{\mu(n)}
\]
be the summatory function of the M\"{o}bius function. In 1897, Mertens \cite{Mertens} calculated $M(x)$ from $x = 1$ up to $x = 10\,000$ and conjectured the following inequality.

\begin{Mertensconj}
For all $x \geq 1$, the summatory function of the M\"{o}bius function satisfies the inequality
\begin{equation}\label{Mertensconjecture}
\frac{|M(x)|}{\sqrt{x}} \leq 1.
\end{equation}
\end{Mertensconj}

The Mertens conjecture has several important consequences, the most notable of which are that the Riemann hypothesis is true and that all the zeroes of the Riemann zeta function, $\zeta(s)$, are simple; see \cite[\S 2]{Odlyzko} for further details.

In this article, we study the natural analogue of this conjecture in the setting of global function fields, that is, for nonsingular projective curves over finite fields. Let $C$ be a nonsingular projective curve of genus $g$ over a finite field $\F_q$ of characteristic $p$; we will assume throughout that $p$ is odd. For each effective divisor $N$ of $C$, we define the M\"{o}bius function of $C/\F_q$ to be
	\[\mu_{C/\F_q}(N) = \begin{cases}
1 & \text{if $N$ is the zero divisor,}	\\
(-1)^t & \text{if $N$ is the sum of $t$ distinct prime divisors of $C$,}	\\
0 & \text{if a prime divisor of $C$ divides $N$ with order at least $2$,}
\end{cases}\]
so that the summatory function of the M\"{o}bius function of $C/\F_q$ is
	\[M_{C/\F_q}(X) = \sum_{0 \leq \deg(N) \leq X - 1}{\mu_{C/\F_q}(N)},
\]
where $X$ is a positive integer. Our goal is to determine the validity of the following conjecture, the analogue of the Mertens conjecture in the function field setting.

\begin{Mertensconjfunctfield}
Let $C$ be a nonsingular projective curve over $\F_q$. The summatory function of the M\"{o}bius function of $C/\F_q$ satisfies
	\[\limsup_{X \to \infty} \frac{\left|M_{C/\F_q}(X)\right|}{q^{X / 2}} \leq 1.
\]
\end{Mertensconjfunctfield}

While the classical Mertens conjecture states that the inequality \eqref{Mertensconjecture} holds for all $x \geq 1$, the value $1$ on the right-hand side of \eqref{Mertensconjecture} is, in some sense, not particularly special. Indeed, Stieltjes \cite{Stieltjes} claimed in 1885 to have a proof that
\begin{equation}\label{Stieltjesbound}
M(x) = O\left(\sqrt{x}\right)
\end{equation}
without specifying an explicit constant, before later rescinding his claim, though he did postulate that \eqref{Mertensconjecture} was true. Similarly, von Sterneck \cite{vonSterneck} conjectured in 1912 that the stronger inequality
\begin{equation}\label{vonSterneckconj}
\frac{\left|M(x)\right|}{\sqrt{x}} \leq \frac{1}{2}
\end{equation}
holds for all $x \geq 200$, based on calculations of $M(x)$ up to $5 \, 000 \, 000$. So we may also consider the following variant of the Mertens conjecture for function fields.

\begin{betaMertensconjfunctfield}
Let $C$ be a nonsingular projective curve over $\F_q$, and let $\beta > 0$. The summatory function of the M\"{o}bius function of $C/\F_q$ satisfies
	\[\limsup_{X \to \infty} \frac{\left|M_{C/\F_q}(X)\right|}{q^{X / 2}} \leq \beta.
\]
\end{betaMertensconjfunctfield}

In spite of the numerical calculations of Mertens and  von Sterneck, the inequalities \eqref{Mertensconjecture} and \eqref{vonSterneckconj} are both now known to fail infinitely often. Odlyzko and te Riele \cite{Odlyzko} disproved the Mertens conjecture in 1985, and showed that
\begin{align*}
\limsup_{x \to \infty} \frac{M(x)}{\sqrt{x}} & > 1.06,	\\
\liminf_{x \to \infty} \frac{M(x)}{\sqrt{x}} & < -1.009.
\end{align*}
These bounds have since been improved to $1.218$ and $-1.229$ respectively by Kotnik and te Riele \cite{Kotnik}, and most recently to $1.6383$ and $-1.6383$ respectively by Best and Trudgian \cite{Best}. Stieltjes's claimed bound \eqref{Stieltjesbound} has yet to be disproved, although it seems likely that
\begin{align*}
\limsup_{x \to \infty} \frac{M(x)}{\sqrt{x}} & = \infty,	\\
\liminf_{x \to \infty} \frac{M(x)}{\sqrt{x}} & = -\infty.
\end{align*}
Indeed, Ingham \cite{Ingham} showed much earlier in 1942 that this follows from the assumption of the Riemann hypothesis and the linear independence over the rational numbers of the imaginary parts of the zeroes of $\zeta(s)$ in the upper half-plane. The latter hypothesis is known as the Linear Independence hypothesis; while there is as yet a lack of strong theoretical evidence for the falsity of the existence of any rational linear dependence between these imaginary parts, some limited numerical calculations have failed to find any such linear relations \cite{Bateman}, \cite{Best}. Most recently, Ingham's result has been refined conditionally by Ng \cite{Ng}, who has shown that the logarithmic density
	\[\delta\left(\PP_{\beta}\right) = \lim_{X \to \infty} \frac{1}{\log X} \int_{\PP_{\beta} \cap [1,X]}{\, \frac{dx}{x}}
\]
of the set
	\[\PP_{\beta} = \left\{x \in [1,\infty) : |M(x)| \leq \beta \sqrt{x}\right\}
\]
exists and satisfies the bound
	\[\delta\left(\PP_{\beta}\right) < 1
\]
for all $\beta > 0$, and also that for all sufficiently large $\beta$,
	\[\delta\left(\PP_{\beta}\right) > 0,
\]
all under the assumption of the Riemann hypothesis, the Linear Independence hypothesis, and that
	\[J_{-1}(T) = \sum_{0 < \gamma \leq T}{\left|\zeta'\left(1/2 + i \gamma\right)\right|^{-2}} \ll T,
\]
where the sum is over the nontrivial zeroes of $\zeta(s)$. Numerical calculations of Amir Akbary and Nathan Ng (personal communication) suggest that
	\[\delta\left(\PP_1\right) > 0.99999993366,
\]
so that the set of counterexamples of the Mertens conjecture, despite conditionally having strictly positive logarithmic density, is nevertheless extremely sparsely distributed in $[1,\infty)$.

In the function field setting, on the other hand, the situation is markedly different: if the zeroes of the zeta function $Z_{C/\F_q}(u)$ of $C/\F_q$ are not too poorly behaved, in the sense that $Z_{C/\F_q}(u)$ has only simple zeroes, then Cha \cite[Corollary 2.3]{Cha} has shown that
	\[\limsup_{X \to \infty} \frac{\left|M_{C/\F_q}(X)\right|}{q^{X / 2}}
\]
is bounded. Here $Z_{C/\F_q}(u)$ is defined initially for a complex variable $u$ in the open disc $|u| < q^{-1}$ via the absolutely convergent series
 	\[Z_{C/\F_q}(u) = \exp\left(\sum^{\infty}_{n = 1}{\# C\left(\F_{q^n}\right) \frac{u^n}{n}}\right).
\]
where $\# C\left(\F_{q^n}\right)$ denotes the number of points of $C$ in the field extension $\F_{q^n}$ of $\F_q$. This function extends meromorphically to the whole complex plane; indeed,
\begin{equation}\label{zetarational}
Z_{C/\F_q}(u) = \frac{P_{C/\F_q}(u)}{(1 - u)(1 - q u)},
\end{equation}
where $P_{C/\F_q}(u)$ is a polynomial of degree $2g$ with integer coefficients that factorises as
	\[P_{C/\F_q}(u) = \prod^{g}_{j = 1}{\left(1 - \gamma_j u\right) \left(1 - \overline{\gamma_j} u\right)}
\]
for some $\gamma_j = \sqrt{q} e^{i \theta\left(\gamma_j\right)}$ with $\theta\left(\gamma_j\right) \in [0,\pi]$; here $g$ is the genus of the curve $C$. The fact that each $\gamma_j$ satisfies $\left|\gamma_j\right| = \sqrt{q}$ is known as the Riemann hypothesis for function fields, and was proved by Weil in 1940 \cite{Weil}; as it is already known that the Riemann hypothesis for the Riemann zeta function is intimately connected to the growth of $M(x)$, we can immediately see the benefit of the function field setting. Furthermore, there are only finitely many zeroes of $Z_{C/\F_q}(u)$, so it is actually possible to confirm, given the zeta function of a curve $C/\F_q$, the following function field analogue of the Linear Independence hypothesis.

\begin{definition}
We say that $C$ satisfies the \emph{Linear Independence hypothesis}, which we abbreviate to LI, if the collection
	\[\pi, \theta\left(\gamma_1\right), \ldots, \theta\left(\gamma_g\right)
\]
is linearly independent over the rational numbers.
\end{definition}

See \cite[\S 6]{Kowalski} for examples of computationally determining whether a particular curve satisfies LI. Note also that the zeta function of a curve satisfying LI must only have simple zeroes.

In \cite{Cha}, Cha proves the following result about the maximal order of growth of $M_{C/\F_q}(X)$.

\begin{theorem}[Cha {\cite[Theorem 2.5]{Cha}}]\label{muLIboundsthm}
Suppose that $C$ is a nonsingular projective curve of genus $g \geq 1$ that satisfies \textup{LI}. Then
\begin{equation}\label{BLI}
\limsup_{X \to \infty} \frac{\left|M_{C/\F_q}(X)\right|}{q^{X / 2}} = 2 \sum^{g}_{j = 1}{\left|\frac{1}{{Z_{C/\F_q}}'\left(\gamma_j^{-1}\right)} \frac{\gamma_j}{\gamma_j - 1}\right|} < \infty.
\end{equation}
\end{theorem}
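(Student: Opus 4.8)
The plan is to find an exact closed form for $M_{C/\F_q}(X)$ as a finite exponential sum in the Frobenius eigenvalues $\gamma_j$, and then to read off its $\limsup$ via the Kronecker--Weyl equidistribution theorem, which is the only place LI is genuinely needed. First I would record the Euler product $Z_{C/\F_q}(u) = \prod_{\mathfrak{p}}(1 - u^{\deg\mathfrak{p}})^{-1}$ over the prime divisors $\mathfrak{p}$ of $C$; by unique factorisation of effective divisors into prime divisors this is formally equivalent to the defining exponential, and inverting it gives the generating function of the M\"{o}bius function,
\[
\sum_{N \geq 0} \mu_{C/\F_q}(N)\, u^{\deg N} = \prod_{\mathfrak{p}}\left(1 - u^{\deg\mathfrak{p}}\right) = \frac{1}{Z_{C/\F_q}(u)} = \frac{(1 - u)(1 - q u)}{P_{C/\F_q}(u)},
\]
using \eqref{zetarational}. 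Writing $1/Z_{C/\F_q}(u) = \sum_{n \geq 0} m(n) u^n$, we then have $M_{C/\F_q}(X) = \sum_{n = 0}^{X-1} m(n)$.

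Since $C$ satisfies LI, no $\theta(\gamma_j)$ equals $0$ or $\pi$ and no two of them coincide (either would be a nontrivial rational linear relation among $\pi, \theta(\gamma_1), \dots, \theta(\gamma_g)$), so the $2g$ numbers $\gamma_1, \overline{\gamma_1}, \dots, \gamma_g, \overline{\gamma_g}$ are distinct; as $|\gamma_j| = \sqrt q$ keeps them away from $1$ and $q^{-1}$, the rational function $1/Z_{C/\F_q}(u)$ has exactly the simple poles $\gamma_j^{-1}$ and $\overline{\gamma_j}^{-1}$. A partial fraction decomposition therefore gives
\[
\frac{1}{Z_{C/\F_q}(u)} = c + \sum_{j = 1}^{g}\left(\frac{A_j}{1 - \gamma_j u} + \frac{\overline{A_j}}{1 - \overline{\gamma_j}\, u}\right)
\]
for a real constant $c$ (with $c = 0$ once $g \geq 2$), and computing the residue at $u = \gamma_j^{-1}$, together with the identity ${P_{C/\F_q}}'(\gamma_j^{-1}) = {Z_{C/\F_q}}'(\gamma_j^{-1})(1 - \gamma_j^{-1})(1 - q\gamma_j^{-1})$, collapses it to the clean formula $A_j = -\gamma_j/{Z_{C/\F_q}}'(\gamma_j^{-1})$. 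Reading off Taylor coefficients gives $m(n) = \sum_{j}\bigl(A_j \gamma_j^n + \overline{A_j}\,\overline{\gamma_j}^{\,n}\bigr)$ for $n \geq 1$, and summing the geometric progressions $\sum_{n = 1}^{X-1}\gamma_j^n$ produces
\[
M_{C/\F_q}(X) = c_0 - 2\operatorname{Re}\Biggl(\sum_{j = 1}^{g} \beta_j\, \gamma_j^X\Biggr), \qquad \beta_j := \frac{A_j}{1 - \gamma_j} = \frac{1}{{Z_{C/\F_q}}'(\gamma_j^{-1})}\cdot\frac{\gamma_j}{\gamma_j - 1},
\]
where $c_0$ is a constant depending only on $C$. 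Writing $\gamma_j^X = q^{X/2} e^{i X \theta(\gamma_j)}$ and $\beta_j = |\beta_j| e^{i \phi_j}$, and using $c_0 q^{-X/2} \to 0$, this becomes
\[
\frac{M_{C/\F_q}(X)}{q^{X/2}} = -2\sum_{j = 1}^{g} |\beta_j|\cos\bigl(X\theta(\gamma_j) + \phi_j\bigr) + o(1).
\]

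The crux is to show that the $\limsup$ of the absolute value of the right-hand side equals the a priori upper bound $2\sum_j |\beta_j|$, and this is where the full strength of LI enters. For any nonzero $(k_1, \dots, k_g) \in \mathbb{Z}^g$ the number $\sum_j k_j \theta(\gamma_j)$ is not an integer multiple of $2\pi$, since $k_1\theta(\gamma_1) + \dots + k_g\theta(\gamma_g) - 2m\pi = 0$ would be a nontrivial rational linear dependence among $\pi, \theta(\gamma_1), \dots, \theta(\gamma_g)$; hence Weyl's criterion shows that $\bigl(X\theta(\gamma_1), \dots, X\theta(\gamma_g)\bigr)$ is equidistributed modulo $2\pi$ as $X$ runs through the positive integers. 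In particular there are infinitely many $X$ for which $X\theta(\gamma_j)$ lies within $\e$ of $-\phi_j$ modulo $2\pi$ for every $j$ simultaneously; along such $X$ each cosine exceeds $\cos\e$, so the displayed quantity has absolute value greater than $2\cos\e\sum_j |\beta_j|$, and letting $\e \to 0$ gives the matching lower bound, while the trivial bound gives $\leq 2\sum_j|\beta_j|$. Finiteness of the limit is immediate from the closed form, since ${Z_{C/\F_q}}'(\gamma_j^{-1}) \neq 0$ (each $\gamma_j^{-1}$ being a simple zero of $Z_{C/\F_q}$ under LI) and $\gamma_j \neq 1$. I expect the residue and geometric-series manipulations to be routine bookkeeping; the point that most needs care is confirming that LI really does force all $2g$ reciprocal roots to be simple and distinct from $u = 1$ and $u = q^{-1}$, so that the partial fraction decomposition — and hence the whole closed form — is legitimate.
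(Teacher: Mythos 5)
Your proof is correct and matches Cha's original argument (which the paper cites rather than reproving), proceeding by partial-fraction decomposition of $1/Z_{C/\F_q}(u)$ at its simple poles $\gamma_j^{-1}, \overline{\gamma_j}^{-1}$, summing the resulting geometric series, and applying Kronecker--Weyl equidistribution via LI. The paper itself uses precisely this technique in Section~4 when proving the analogous lemma for the ``localised'' summatory function, referring to the method of proof of Proposition~2.2 in \cite{Cha}.
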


Cha also shows \cite[page 5]{Cha} that the case $g = 0$ is trivial, where $C = \mathbb{P}^1$ is the projective line, so that $C/\F_q$ is simply the rational function field $\F_q(t)$, for then
	\[M_{\F_q(t)}(X) = \begin{cases}
1 & \text{if $X = 1$,}	\\
-q & \text{if $X = 2$,}	\\
0 & \text{if $X \geq 3$.}
\end{cases}\]
Note that in this case
	\[Z_{\F_q(t)}(u) = \frac{1}{(1 - u)(1 - qu)}
\]
is the completed zeta function of the zeta function
	\[Z_{\F_q[t]}(u) = \frac{1}{1 - qu}
\]
of the ring $\F_q[t]$. One can define the M\"{o}bius function of a polynomial in $\F_q[t]$ in the same fashion as for a global function field $C/\F_q$ and show similarly that
	\[M_{\F_q[t]}(X) = \begin{cases}
1 & \text{if $X = 1$,}	\\
-q + 1 & \text{if $X \geq 2$;}
\end{cases}\]
see \cite[Chapter 2]{Rosen}.

So if a curve $C$ of genus $g \geq 1$ satisfies LI, then we need only determine the right-hand side of \eqref{BLI} in order to see whether $C/\F_q$ satisfies the Mertens conjecture. Unlike in the classical case, however, where we expect the Riemann zeta function to satisfy LI, there do exist curves $C$ that do not satisfy LI; furthermore, in the particular case when $Z_{C/\F_q}(u)$ has zeroes of multiple order, then the work of Cha \cite[Proposition 2.2]{Cha} indicates that
	\[\limsup_{X \to \infty} \frac{\left|M_{C/\F_q}(X)\right|}{q^{X / 2}} = \infty.
\]
Nevertheless, we can ensure that such curves are extremely rare by restricting to certain families of curves, namely hyperelliptic curves. With this family, we also have the added bonus of a framework for certain equidistribution results and connections to random matrix theory via the work of Katz and Sarnak \cite{Katz}.

We define this family of curves as follows: for $\F_{q^n}$ a finite field of odd characteristic, and for $g \geq 1$, let $f$ be a monic polynomial of degree $2g + 1$ with coefficients in $\F_{q^n}$ whose discriminant is nonzero; equivalently, let $f$ be a squarefree monic polynomial in $\F_{q^n}[x]$ of degree $2g + 1$. Each such polynomial $f$ thereby defines a hyperelliptic curve $C_f$ of genus $g$ over $\F_{q^n}$ via the affine model $y^2 = f(x)$. So we let $\HH_{2g + 1, q^n}$ denote the set of these hyperelliptic curves $C = C_f$ over $\F_{q^n}$. We are interested in properties of such curves $C$ shared by ``most'' $C \in \HH_{2g + 1, q^n}$. To define this notion, we consider $\HH_{2g + 1, q^n}$ as a probability space with the uniform probability measure, so that for a property $D$ of a hyperelliptic curve $C \in \HH_{2g + 1, q^n}$,
	\[\Prob_{\HH_{2g + 1, q^n}}\left(\textup{$C$ satisfies $D$}\right) = \frac{\#\left\{C \in \HH_{2g + 1, q^n} : \textup{$C$ satisfies $D$}\right\}}{\#\HH_{2g + 1, q^n}}.
\]

\begin{definition}\label{mostdef}
We say that \emph{most} hyperelliptic curves $C \in \HH_{2g + 1, q^n}$ have the property $D = \{D_n\}^{\infty}_{n = 1}$ as $n$ tends to infinity if
	\[\lim_{n \to \infty} \Prob_{\HH_{2g + 1, q^n}}\left(\textup{$C$ satisfies $D_n$}\right) = 1.
\]
\end{definition}

We are now able to state the main result of this article.

\begin{theorem}\label{mainthm}
Let $q$ and $g \geq 1$ be fixed. Then we have that
	\[\lim_{n \to \infty} \Prob_{\HH_{2g + 1, q^n}}\left(\textup{$C$ satisfies the $\beta$-Mertens conjecture}\right) = 0
\]
for $0 < \beta \leq 1$, whereas when $\beta > 1$,
	\[0 < \lim_{n \to \infty} \Prob_{\HH_{2g + 1, q^n}}\left(\textup{$C$ satisfies the $\beta$-Mertens conjecture}\right) < 1.
\]
That is, as $n$ tends to infinity, most hyperelliptic curves $C \in \HH_{2g + 1, q^n}$ do not satisfy the Mertens conjecture, but for any $\beta > 1$, a positive proportion of hyperelliptic curves satisfy the $\beta$-Mertens conjecture.
\end{theorem}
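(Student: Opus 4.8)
The plan is to use Cha's formula \eqref{BLI} to express the $\limsup$ as an explicit function of the unitarized Frobenius eigenangles $\theta_1,\dots,\theta_g$ of $C$, to identify the $q^n\to\infty$ limit of that function, and then to feed the Katz--Sarnak equidistribution of these eigenangles in $\USp(2g)$ through it. \textbf{Step 1 (an explicit formula).} Since $M_{C/\F_{q^n}}(X)$ is the coefficient of $u^{X-1}$ in
\[
\frac{1}{(1-u)\,Z_{C/\F_{q^n}}(u)}=\frac{1-q^nu}{P_{C/\F_{q^n}}(u)},
\]
and this rational function has numerator degree $1<2g$, a partial fraction decomposition gives $M_{C/\F_{q^n}}(X)=-2\operatorname{Re}\sum_{j=1}^{g}c_j\gamma_j^{X}$ exactly, with $c_j=\dfrac{1-q^n\gamma_j^{-1}}{P_{C/\F_{q^n}}'(\gamma_j^{-1})}=\dfrac{1}{Z_{C/\F_{q^n}}'(\gamma_j^{-1})}\cdot\dfrac{\gamma_j}{\gamma_j-1}$. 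As $X\mapsto -2\operatorname{Re}\sum_j c_j e^{iX\theta_j}$ is almost periodic, its $\limsup$ equals its supremum, and under LI this supremum is $2\sum_j|c_j|$, consistently with \eqref{BLI}. Writing $\gamma_j=\sqrt{q^n}\,e^{i\theta_j}$ and simplifying with $|1-e^{i\varphi}|=2|\sin(\varphi/2)|$ and the identity $|1-e^{i(\theta_k-\theta_j)}|\,|1-e^{-i(\theta_k+\theta_j)}|=2|\cos\theta_j-\cos\theta_k|$, one finds
\[
\bigl(1-q^{-n/2}\bigr)B(\theta)\le 2\sum_{j=1}^{g}|c_j|\le\bigl(1+q^{-n/2}\bigr)B(\theta),\qquad B(\theta):=\sum_{j=1}^{g}\frac{1}{2^{g-1}\,|\sin\theta_j|\prod_{k\neq j}|\cos\theta_j-\cos\theta_k|};
\]
the crucial feature is that the limiting quantity $B$ no longer involves $q$.

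\textbf{Step 2 ($B\ge 1$).} Put $x_j=\cos\theta_j$ and $w(x)=2^{g-1}\prod_{j}(x-x_j)$, so that $|w'(x_j)|=2^{g-1}\prod_{k\neq j}|x_j-x_k|$ and $B(\theta)=\sum_j\bigl(|\sin\theta_j|\,|w'(x_j)|\bigr)^{-1}$. Since $w$ and the Chebyshev polynomial $U_{g-1}$ of the second kind both have leading coefficient $2^{g-1}$, Lagrange interpolation gives $\sum_j U_{g-1}(x_j)/w'(x_j)=1$; as $|U_{g-1}(\cos\theta_j)|=|\sin(g\theta_j)|/|\sin\theta_j|\le 1/|\sin\theta_j|$, it follows that
\[
1=\left|\sum_{j=1}^{g}\frac{U_{g-1}(x_j)}{w'(x_j)}\right|\le\sum_{j=1}^{g}\frac{1}{|\sin\theta_j|\,|w'(x_j)|}=B(\theta).
\]
Equality forces $|\sin(g\theta_j)|=1$ for all $j$, so among distinct angles the value $B=1$ is attained precisely at the Chebyshev configuration $\theta_j=(2j-1)\pi/(2g)$, an interior point of the angle simplex. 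Moreover $B$ is real-analytic and non-constant (it tends to $+\infty$ whenever two angles collide or an angle approaches $0$ or $\pi$), so each level set $\{B=\beta\}$ is null for Haar measure.

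\textbf{Step 3 (equidistribution and conclusion).} For $0<\beta<1$ no equidistribution is needed: by the Riemann hypothesis for function fields, $M_{C/\F_{q^n}}(2)=1-\#C(\F_{q^n})=-q^n+O_g(q^{n/2})$, so $\limsup_X|M_{C/\F_{q^n}}(X)|/q^{nX/2}\ge|M_{C/\F_{q^n}}(2)|/q^n\ge 1-2gq^{-n/2}>\beta$ for every $C\in\HH_{2g+1,q^n}$ once $n$ is large, whence the probability is eventually $0$. For $\beta\ge 1$ I would invoke Katz--Sarnak: as $n\to\infty$ the vector $(\theta_1,\dots,\theta_g)$ equidistributes with respect to the pushforward to the angle simplex of Haar measure $\Haar$ on $\USp(2g)$, whose density there is the strictly positive Weyl factor $\propto\prod_{j<k}(\cos\theta_j-\cos\theta_k)^2\prod_j\sin^2\theta_j$; in particular $\Haar$ has full support. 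Combining this with Step 1, with the continuity of $F(\beta):=\Haar(\{B\le\beta\})$ from Step 2, and with control of the curves that fail LI (for which $\limsup$ may fall strictly below $2\sum_j|c_j|$), one obtains $\lim_{n\to\infty}\Prob_{\HH_{2g+1,q^n}}\bigl(\textup{$C$ satisfies the $\beta$-Mertens conjecture}\bigr)=F(\beta)$. Finally, Step 2 gives $F(\beta)=0$ for $\beta\le 1$ (the set $\{B\le 1\}$ reduces to the single Chebyshev point); for $\beta>1$ a neighbourhood of the Chebyshev configuration lies in $\{B<\beta\}$, so $F(\beta)>0$ by full support, while the blow-up of $B$ near the boundary yields a non-empty open subset of $\{B>\beta\}$, so $F(\beta)<1$.

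\textbf{The main obstacle} is the step ``control of the curves that fail LI'', together with the exact threshold $\beta=1$, where no slack is available. One option is to prove that the curves in $\HH_{2g+1,q^n}$ failing LI have probability tending to $0$, an input of monodromy type. A more elementary option keeps only the inequalities $\limsup_X|M_{C/\F_{q^n}}(X)|/q^{nX/2}\ge|M_{C/\F_{q^n}}(X_0)|/q^{nX_0/2}$ for each fixed $X_0$ — the $q^n\to\infty$ limits of the right-hand sides being explicit functions of $\theta$, with the $X_0=2$ limit identically equal to $1$ — and then runs a truncation-and-nested-limit argument, using only that LI fails on a $\Haar$-null set and that the angle vector avoids a small neighbourhood of the boundary with probability $\to 1$. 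Given this, Step 2's interpolation inequality and Step 3's equidistribution are essentially bookkeeping.
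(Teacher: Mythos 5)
Your proposal is correct and follows the same overall architecture as the paper — Cha's formula, equidistribution of Frobenius classes via Deligne/Katz--Sarnak, control of non-LI curves, and the key fact that the minimum of $\varphi$ on $\USp(2g)$ equals $1$ — but your proof of that minimum is genuinely different and, I think, more elegant. The paper proves Proposition~\ref{unitarysympthm} (via its $\U(N)$ generalisation, Proposition~\ref{unitarythm}) by applying the arithmetic--geometric mean inequality and then Jensen's inequality for the convex function $\log\cosec\theta$, with a telescoping sum $\sum_k \omega_{jk}=j\pi$ doing the bookkeeping. You instead pass to $x_j=\cos\theta_j$, write $w(x)=2^{g-1}\prod_j(x-x_j)$, and exploit the leading-coefficient identity from Lagrange interpolation of the degree-$(g-1)$ Chebyshev polynomial $U_{g-1}$, namely $\sum_j U_{g-1}(x_j)/w'(x_j)=1$, combined with $|U_{g-1}(\cos\theta_j)|=|\sin(g\theta_j)|/|\sin\theta_j|\le 1/|\sin\theta_j|$. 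That gives $\varphi\ge 1$ in one line and identifies the equality case (all $|\sin(g\theta_j)|=1$, forcing the Chebyshev configuration among distinct angles — one should also note that the ratios $U_{g-1}(x_j)/w'(x_j)$ then all have the same sign, so the triangle inequality is an equality too, but that is automatic from the alternating signs of $w'(x_j)$ and of $\sin(g\theta_j)$). What your route buys is an algebraic, interpolation-theoretic proof tied tightly to $\USp(2g)$; what the paper's route buys is a statement uniform over $\U(N)$ and, via the same convexity trick, the minimum of $\sum_j|\ZZ_U'(\theta_j)|^{2k}$ for all $k<0$. Your extra observation that $M_{C/\F_{q^n}}(2)=1-\#C(\F_{q^n})$ already forces $B(C/\F_{q^n})>\beta$ for every $C$ once $n$ is large, for each fixed $\beta<1$, is a nice shortcut that avoids equidistribution entirely in the subcritical range (the critical value $\beta=1$ still needs the full machinery). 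Finally, the ``main obstacle'' you flag — that the proportion of $C\in\HH_{2g+1,q^n}$ failing LI must tend to $0$ — is precisely Theorem~\ref{Kowalskithm}, the Cha--Chavdarov--Kowalski monodromy result; you correctly identified what is needed, and that is the input the paper supplies in place of your tentative truncation argument, which by itself cannot reach the threshold $\beta=1$.
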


So although the Mertens conjecture is false for most hyperelliptic curves, the value $\beta = 1$ is critical, in that it is the greatest value of $\beta$ for which the $\beta$-Mertens conjecture is not satisfied for most hyperelliptic curves.

Results of this form were found by the author \cite{Humphries} in the low-genus case $g = 1$, so that $C$ is an elliptic curve: the author used a classification due to Waterhouse \cite{Waterhouse} of isogeny classes of elliptic curves over finite fields in terms of their Frobenius angles in order to determine explicitly the isogeny classes for which the Mertens conjecture holds, in the form of the following result.

\begin{theorem}[Humphries {\cite[Theorem 2.1]{Humphries}}]\label{mainellcurvethm}
Let $E$ be an elliptic curve over a finite field $\F_q$ of characteristic $p$. Then the Mertens conjecture for $E/\F_q$ is true if and only if the order of the finite field $q$ and the trace $a_E$ of the Frobenius endomorphism acting on $E$ over $\F_q$ satisfy precisely one of the following conditions:
\begin{enumerate}
\item[(1)]	$q = p^m$ with $a_E = 2$, where either $m$ is arbitrary and $p \neq 2$, or $m = 1$ and $p = 2$,
\item[(2)]	$q = p^m$ with $a_E = \sqrt{q}$, where $m$ is even and $p \not\equiv 1 \pmod{3}$,
\item[(3)]	$q = p^m$ with $a_E = 0$, where either $m$ is even and $p \not\equiv 1 \pmod{4}$, or $m$ is odd.
\end{enumerate}
In all these cases, we have that
	\[\limsup_{X \to \infty} \frac{\left|M_{E/\F_q}(X)\right|}{q^{X / 2}} = 1.
\]
\end{theorem}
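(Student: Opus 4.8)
The plan is to extract an exact closed formula for $M_{E/\F_q}(X)$ straight from the zeta function, read off the $\limsup$ in every case, and then intersect the resulting list of admissible Frobenius traces with Waterhouse's classification. Write $\gamma, \overline{\gamma}$ for the inverse roots of $P_{E/\F_q}(u) = 1 - a_E u + qu^2$, so that $\gamma + \overline{\gamma} = a_E$, $\gamma\overline{\gamma} = q$ and $\left|\gamma\right| = \sqrt{q}$, and put $\theta = \theta(\gamma) \in [0,\pi]$. Starting from the M\"{o}bius inversion identity $\sum_{N \geq 0}{\mu_{E/\F_q}(N) u^{\deg N}} = Z_{E/\F_q}(u)^{-1}$ in the monoid of effective divisors, grouping terms by degree and summing a geometric series gives
\[
\sum_{X \geq 1}{M_{E/\F_q}(X)\, u^X} = \frac{u}{1 - u}\, Z_{E/\F_q}(u)^{-1} = \frac{u(1 - qu)}{P_{E/\F_q}(u)}
\]
by \eqref{zetarational}. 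When $\gamma \neq \overline{\gamma}$, equivalently $a_E \neq \pm 2\sqrt{q}$, a partial fraction expansion of the right-hand side yields, for every $X \geq 1$,
\[
M_{E/\F_q}(X) = 2\operatorname{Re}\!\left(B \gamma^X\right), \qquad B = \frac{1 - \overline{\gamma}}{\gamma - \overline{\gamma}},
\]
so that $q^{-X/2} M_{E/\F_q}(X) = 2\operatorname{Re}\!\left(B e^{i X \theta}\right)$; when $\gamma = \overline{\gamma}$ — which forces $q$ to be a perfect square and $a_E = \pm 2\sqrt{q}$ — the same expansion instead produces a nonzero term of size $X q^{X/2}$, so $\limsup_{X \to \infty} q^{-X/2}\left|M_{E/\F_q}(X)\right| = \infty$ and the Mertens conjecture fails, in accordance with the non-simplicity phenomenon recorded before Theorem~\ref{muLIboundsthm}.

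It remains to compute $\limsup_{X \to \infty} 2\operatorname{Re}\!\left(B e^{i X \theta}\right)$ for $\gamma \neq \overline{\gamma}$, and I would distinguish whether $\theta/\pi$ is irrational. If it is — equivalently, if $E$ satisfies \textup{LI} — then $(X\theta)_{X \geq 1}$ equidistributes modulo $2\pi$ by Weyl's criterion, so the $\limsup$ equals $2\left|B\right|$; using $\left|1 - \overline{\gamma}\right|^2 = (1-\gamma)(1-\overline{\gamma}) = 1 - a_E + q$ and $\left|\gamma - \overline{\gamma}\right|^2 = 4q - a_E^2$ this becomes $2\left|B\right| = 2\sqrt{q - a_E + 1}/\sqrt{4q - a_E^2}$, which one checks coincides with the $g = 1$ case of \eqref{BLI} and is at most $1$ precisely when $(a_E - 2)^2 \leq 0$, i.e.\ when $a_E = 2$, in which case it equals $1$. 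If $\theta/\pi$ is rational, then $2\cos\theta = a_E/\sqrt{q}$ is an algebraic number of degree at most $2$ over $\mathbb{Q}$, while for a rational multiple of $\pi$ it has degree $\varphi(n)/2$ over $\mathbb{Q}$, $n$ being the order of the root of unity $e^{i\theta}$; together with $a_E \in \mathbb{Z}$ this (a sharpening of Niven's theorem) confines $\theta$ to the finite set $\{\pi/6, \pi/4, \pi/3, \pi/2, 2\pi/3, 3\pi/4, 5\pi/6\}$, i.e.\ $a_E \in \{0, \pm\sqrt{q}, \pm\sqrt{2q}, \pm\sqrt{3q}\}$, the last two possibilities requiring $p = 2$ and $p = 3$ respectively. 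In each such case $q^{-X/2} M_{E/\F_q}(X)$ is periodic in $X$, and a direct evaluation over one period shows that its maximum modulus is exactly $1$ when $\theta \in \{\pi/3, \pi/2\}$ (that is, $a_E = \sqrt{q}$ or $a_E = 0$) or when $\theta = \pi/4$ and $q = 2$ (that is, $a_E = 2$), and strictly exceeds $1$ otherwise.

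Putting the two regimes together, the Mertens conjecture for $E/\F_q$ holds if and only if $a_E \in \{2, \sqrt{q}, 0\}$, and $\limsup_{X \to \infty} q^{-X/2}\left|M_{E/\F_q}(X)\right| = 1$ in every one of these cases. Intersecting this with Waterhouse's determination \cite{Waterhouse} of which traces $a_E$ are attained by an elliptic curve over $\F_q$, $q = p^m$, finishes the proof: $a_E = 2$ is attained for all $q = p^m$ when $p$ is odd, and for $p = 2$ only when $q = 2$; $a_E = \sqrt{q}$ is attained exactly when $m$ is even and $p \not\equiv 1 \pmod{3}$; and $a_E = 0$ is attained exactly when $m$ is odd, or $m$ is even and $p \not\equiv 1 \pmod{4}$; these three mutually exclusive possibilities are conditions (1), (2) and (3). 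I expect the main obstacle to be the rational-$\theta$ regime: the enumeration of admissible angles must be run over $\mathbb{Q}(\sqrt{q})$ rather than over $\mathbb{Q}$ (which is exactly what lets $\theta = \pi/4$ and $\theta = \pi/6$ appear when $p = 2$ or $3$), the maximisations of the periodic function $q^{-X/2} M_{E/\F_q}(X)$ over a period are elementary but delicate, and one must reconcile the surviving traces with Waterhouse's list carefully enough that conditions (1)--(3) come out genuinely exclusive — in particular disentangling the overlap $a_E = 2 = \sqrt{q}$ at $q = 4$, which is a supersingular case in characteristic $2$ and belongs to condition (2) rather than condition (1).
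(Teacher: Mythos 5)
Your proposal follows essentially the same route as the cited paper: derive the exact closed form $M_{E/\F_q}(X) = 2\Re(A\gamma^X)$ from the rational expression for $Z_{E/\F_q}(u)^{-1}$, observe that $B(E/\F_q) = 2|A| = 2\sqrt{(q+1-a_E)/(4q-a_E^2)}$ in the irrational-angle (LI) case so that $B \leq 1$ iff $a_E = 2$, handle the finitely many rational angles $\theta/\pi \in \mathbb{Q}$ by the Niven-type analysis over $\mathbb{Q}(\sqrt{q})$ and a direct check over a period, and finish by intersecting the surviving traces $a_E \in \{0, 2, \sqrt{q}\}$ (plus the exceptional $\theta = \pi/4$, $q = 2$) with Waterhouse's isogeny-class classification; this is precisely how Humphries proceeds, and the paper even quotes the same formula for $B(E/\F_q)$ from that reference. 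One small imprecision: your sentence ``$a_E = 2$ is attained for $p = 2$ only when $q = 2$'' is not literally true, since $a_E = 2$ is also realised over $\F_4$ via the supersingular case (iii) of Waterhouse's theorem; you do flag and correctly resolve this $q = 4$ overlap (it belongs to condition (2)), but the sentence as stated should be amended to refer to the ordinary case (i) only.
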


While recent work of Howe, Nart, and Ritzenthaler \cite{Howe} classifies the isogeny classes of curves of genus two, there is as yet no such classification for curves of genus $g \geq 3$, so this method does not generalise to curves of large genus; indeed, a classification of isogeny classes of curves of a given genus $g$ would involve explicitly solving the (open) Schottky problem, namely giving an explicit description of all principally polarised abelian varieties that are Jacobian varieties of curves.

Instead, the methods for proving Theorem \ref{mainthm} involve relating the average
	\[\lim_{n \to \infty} \Prob_{\HH_{2g + 1, q^n}}\left(\textup{$C$ satisfies the $\beta$-Mertens conjecture}\right)
\]
to the Haar measure on a certain compact group of random matrices, then analysing the behaviour of the resulting probability value. These methods are closely related to the work of Cha \cite{Cha}, from which many of the results in this paper originate; in \cite{Cha}, Cha introduces the summatory function $M_{C/\F_q}(X)$ and studies a truncated form of the average of $\left|M_{C/\F_q}(X)\right| / q^{X / 2}$ over function fields,
	\[\lim_{n \to \infty} \frac{1}{\#\HH_{2g + 1, q^n}} \sum_{C \in \HH_{2g + 1, q^n}}{\left(\limsup_{X \to \infty} \frac{\left|M_{C/\F_q}(X)\right|}{q^{X / 2}}\right)}.
\]
Using random matrix methods, Cha is led to conjecture that this truncated average, after taking the limit as $n$ tends to infinity, is asymptotic to $c g^{1/4}$ in the limit as $g$ tends to infinity, where $c$ is a specific given constant.

\section{Preliminary Results}

We begin by converting this problem to a related problem for a certain family of random matrices. Our first step is to express the quantity
	\[B\left(C/\F_q\right) = \limsup_{X \to \infty} \frac{\left|M_{C/\F_q}(X)\right|}{q^{X / 2}}
\]
in the language of unitary symplectic matrices. Recall that the space of unitary symplectic matrices $\USp(2g)$ consists of $2g \times 2g$ matrices $U$ with complex entries satisfying $U^{\dagger} U = I$ and $U^T J U = J$, where
	\[J = \begin{pmatrix} 0 & I_g \\ - I_g & 0 \end{pmatrix}
\]
and $I_g$ denotes the $g \times g$ identity matrix. The eigenvalues of $U$ lie on the unit circle and come in complex conjugate pairs, so that we may order the eigenvalues $e^{i \theta_1}, \ldots, e^{i \theta_{2g}}$ such that $\theta_{j + g} = - \theta_j$ with $0 \leq \theta_j \leq \pi$ for $1 \leq j \leq g$. Conversely, given $\left(\theta_1, \ldots, \theta_g\right) \in [0,\pi]^g$, the diagonal matrix with diagonal entries $e^{i \theta_1}, \ldots, e^{i \theta_g}, e^{- i \theta_1}, \ldots, e^{- i \theta_g}$ lies in $\USp(2g)$. Thus the set of conjugacy classes $\USp(2g)^{\#}$ of $\USp(2g)$ corresponds to $[0,\pi]^g$.

\begin{definition}
For each $U \in \USp(2g)$, we define the \emph{characteristic polynomial} $\ZZ_U(\theta)$ for real $\theta$ by
	\[\ZZ_U(\theta) = \det\left(I - U e^{-i \theta}\right).
\]
Equivalently,
\begin{equation}\label{charpol}
\ZZ_U(\theta) = \prod^{2g}_{j = 1}{\left(1 - e^{i(\theta_j - \theta)}\right)} = 2^g \prod^{g}_{j = 1}{e^{i \theta}\left(\cos \theta - \cos \theta_j\right)}.
\end{equation}
\end{definition}

For a nonsingular projective curve $C$ over $\F_q$ of genus $g \geq 1$, there exists a conjugacy class $\vartheta\left(C/\F_q\right)$ in $\USp(2g)^{\#}$, called the unitarised Frobenius conjugacy class attached to $C/\F_q$, satisfying
\begin{equation}\label{charFrob}
\ZZ_{\vartheta\left(C/\F_q\right)}(\theta) = P_{C/\F_q}\left(\frac{e^{-i \theta}}{\sqrt{q}}\right) = \prod^{g}_{j = 1}{\left(1 - e^{i(\theta\left(\gamma_j\right) - \theta)}\right) \left(1 - e^{- i(\theta\left(\gamma_j\right) + \theta)}\right)}.
\end{equation}
That is, the eigenangles $\left(\theta_1, \ldots, \theta_g\right)$ corresponding to the unitarised Frobenius conjugacy class $\vartheta\left(C/\F_q\right)$ are precisely $\left(\theta\left(\gamma_1\right), \ldots, \theta\left(\gamma_g\right)\right)$, the angles of the inverse zeroes $\gamma_j = \sqrt{q} e^{i \theta(\gamma_j)}$, $1 \leq j \leq g$, of $Z_{C/\F_q}(u)$.

We require an expression for $B\left(C/\F_q\right)$ in terms of $\ZZ_{\vartheta\left(C/\F_q\right)}(\theta)$ in the large $q$ limit. For $U \in \USp(2g)$, we define the function $\varphi(U)$ by
	\[\varphi(U) = 2 \sum^{g}_{j = 1}{\frac{1}{\left|{\ZZ_U}'\left(\theta_j\right)\right|}},
\]
where $e^{i \theta_1}, \ldots, e^{i \theta_g}, e^{- i \theta_1}, \ldots, e^{i \theta_g}$ are the eigenvalues of $U$, with $0 \leq \theta_j \leq \pi$ for $1 \leq j \leq g$. We observe that $\varphi$ depends only on the conjugacy class $\left(\theta_1, \ldots, \theta_g\right)$ of $U$, and that $\varphi$ is always nonnegative, though it blows up if $U$ has a repeated eigenvalue. Note, however, that the set of matrices in $\USp(2g)$ with repeated eigenvalues has measure zero with respect to the normalised Haar measure on $\USp(2g)$.

\begin{lemma}[Cha {\cite[Equation (26)]{Cha}}]\label{muBqasymp}
Suppose that $C$ satisfies \textup{LI}. Then we have that
	\[\left(1 - \frac{1}{\sqrt{q}}\right) \varphi\left(\vartheta\left(C/\F_q\right)\right) \leq B\left(C/\F_q\right) \leq \left(1 + \frac{1}{\sqrt{q}}\right) \varphi\left(\vartheta\left(C/\F_q\right)\right).
\]
\end{lemma}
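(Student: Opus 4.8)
The plan is to derive the lemma directly from Cha's exact evaluation of $B(C/\F_q)$ in Theorem~\ref{muLIboundsthm} by a term-by-term comparison of the quantities $\left|\frac{1}{{Z_{C/\F_q}}'(\gamma_j^{-1})}\frac{\gamma_j}{\gamma_j-1}\right|$ appearing there with the quantities $|\ZZ_U'(\theta_j)|^{-1}$ appearing in the definition of $\varphi$, where $U=\vartheta(C/\F_q)$. Write $\theta_j=\theta(\gamma_j)$, so that by \eqref{charFrob} the eigenangles of $U$ are the $\theta_j$ and $\ZZ_U(\theta)=P_{C/\F_q}(e^{-i\theta}/\sqrt{q})$. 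Since $C$ satisfies LI, the $\theta_j$ are distinct and lie in the open interval $(0,\pi)$; hence the $2g$ inverse zeros $e^{\pm i\theta_j}/\sqrt{q}$ of $P_{C/\F_q}$ are pairwise distinct and different from the poles $1$ and $q^{-1}$ of $Z_{C/\F_q}$, so that $P_{C/\F_q}$ has only simple zeros and ${Z_{C/\F_q}}'(\gamma_j^{-1})\neq 0$, $\ZZ_U'(\theta_j)\neq 0$.

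First I would compute the two derivatives. Differentiating the rational expression \eqref{zetarational} at $u=\gamma_j^{-1}$, and using $P_{C/\F_q}(\gamma_j^{-1})=0$ to discard the term carrying $P_{C/\F_q}$ itself, gives
\[{Z_{C/\F_q}}'(\gamma_j^{-1}) = \frac{P_{C/\F_q}'(\gamma_j^{-1})}{(1-\gamma_j^{-1})(1-q\gamma_j^{-1})}.\]
On the other side, differentiating $\ZZ_U(\theta)=P_{C/\F_q}(e^{-i\theta}/\sqrt{q})$ by the chain rule gives $\ZZ_U'(\theta)=-i(e^{-i\theta}/\sqrt{q})\,P_{C/\F_q}'(e^{-i\theta}/\sqrt{q})$, whence $|\ZZ_U'(\theta_j)| = q^{-1/2}\,|P_{C/\F_q}'(\gamma_j^{-1})|$.

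I would then substitute and simplify, using $|\gamma_j|=\sqrt{q}$, $|1-\gamma_j^{-1}| = q^{-1/2}|\gamma_j-1|$, and the identity $q\gamma_j^{-1}=\overline{\gamma_j}$ (so that $|1-q\gamma_j^{-1}| = |\gamma_j-1|$). After cancelling the powers of $\sqrt{q}$ and the factors $|\gamma_j-1|$, every term collapses to
\[\left|\frac{1}{{Z_{C/\F_q}}'(\gamma_j^{-1})}\,\frac{\gamma_j}{\gamma_j-1}\right| = \frac{1}{|\ZZ_U'(\theta_j)|}\left|e^{i\theta_j}-\frac{1}{\sqrt{q}}\right|,\]
and since $|e^{i\theta_j}|=1$ the triangle inequality bounds the final factor between $1-q^{-1/2}$ and $1+q^{-1/2}$. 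Multiplying by $2$, summing over $1\le j\le g$, and reading the middle quantity through the identity of Theorem~\ref{muLIboundsthm} and the two outer ones through the definition of $\varphi(U)$ then yields exactly
\[\left(1-\frac{1}{\sqrt{q}}\right)\varphi\left(\vartheta\left(C/\F_q\right)\right) \le B\left(C/\F_q\right) \le \left(1+\frac{1}{\sqrt{q}}\right)\varphi\left(\vartheta\left(C/\F_q\right)\right).\]

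There is no genuine analytic difficulty here; all of the real content sits in Theorem~\ref{muLIboundsthm}, which may be assumed. The only step that demands care is the algebraic simplification: one must track the several moduli and powers of $\sqrt{q}$ accurately so that the net per-term factor comes out as precisely $\left|e^{i\theta_j}-q^{-1/2}\right|$, since it is exactly this clean form that forces the flanking constants to be the symmetric $1\pm q^{-1/2}$ rather than something messier.
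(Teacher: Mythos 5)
Your proof is correct, and it takes the natural route: start from the exact evaluation of $B(C/\F_q)$ in Theorem~\ref{muLIboundsthm}, translate ${Z_{C/\F_q}}'(\gamma_j^{-1})$ into ${\ZZ_{\vartheta(C/\F_q)}}'(\theta_j)$ via \eqref{zetarational} and \eqref{charFrob}, and observe that the per-term ratio collapses to $\left|e^{i\theta_j}-q^{-1/2}\right|$, which the triangle inequality traps in $[1-q^{-1/2},\,1+q^{-1/2}]$. I checked the algebra: with $P=P_{C/\F_q}$, one has $|1-\gamma_j^{-1}|=|\gamma_j-1|/\sqrt{q}$, $|1-q\gamma_j^{-1}|=|1-\overline{\gamma_j}|=|\gamma_j-1|$, and $|P'(\gamma_j^{-1})|=\sqrt{q}\,|\ZZ_U'(\theta_j)|$, so $\left|\frac{1}{Z'_{C/\F_q}(\gamma_j^{-1})}\frac{\gamma_j}{\gamma_j-1}\right|=\frac{|\gamma_j-1|}{\sqrt{q}\,|\ZZ_U'(\theta_j)|}=\frac{|e^{i\theta_j}-q^{-1/2}|}{|\ZZ_U'(\theta_j)|}$ as you claimed. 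The paper itself does not reprove this lemma (it cites Cha), but your derivation is exactly the kind of computation the paper does carry out explicitly in the proof of the later localised analogue (the lemma labelled ``cf.\ Lemma \ref{muBqasymp}''), where the only difference is the absence of the factor $\gamma_j/(\gamma_j-1)$, which accounts for the squared constants $(1\pm q^{-1/2})^2$ there. So your argument is consistent with the paper's method and is correct. Your remark that LI forces the $\theta_j$ to be distinct and to avoid $\{0,\pi\}$ is also right and justifies the nonvanishing of the derivatives; one could equally invoke the paper's observation that LI implies simplicity of the zeroes of $Z_{C/\F_q}$.
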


Of course, we cannot say which individual curves satisfy LI without studying their zeroes. Nevertheless, we have the following remarkable result for the family of hyperelliptic curves.

\begin{theorem}[Cha--Chavdarov--Kowalski {\cite[Theorem 3.1]{Cha}, \cite{Chavdarov}, \cite{Kowalski}}]\label{Kowalskithm}
Let $q$ and $g \geq 1$ be fixed. Then
	\[\lim_{n \to \infty} \Prob_{\HH_{2g + 1, q^n}}\left(\textup{$C$ satisfies LI}\right) = 1.
\]
That is, as $n$ tends to infinity, most hyperelliptic curves $C \in \HH_{2g + 1, q^n}$ satisfy \textup{LI}.
\end{theorem}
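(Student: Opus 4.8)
The plan is to deduce \textup{LI} from a \emph{large Galois group} condition on the numerator $P_{C/\F_{q^n}}(u)$ of the zeta function, and then to invoke the large-sieve results of Chavdarov \cite{Chavdarov} and Kowalski \cite{Kowalski} --- whose hypotheses are available because the geometric monodromy group of the family of hyperelliptic curves of genus $g$ is the full symplectic group $\mathrm{Sp}(2g)$ (see \cite{Katz}) --- to show that this condition holds for most curves. Write $\gamma_1, \ldots, \gamma_g, \overline{\gamma_1}, \ldots, \overline{\gamma_g}$ for the inverse zeroes of $Z_{C/\F_{q^n}}(u)$, so that $P_{C/\F_{q^n}}(u) = \prod_{j=1}^{g}(1 - \gamma_j u)(1 - \overline{\gamma_j} u)$ is an integer polynomial whose $2g$ inverse roots fall into the pairs $\{\gamma_j, \overline{\gamma_j}\}$ with $\gamma_j \overline{\gamma_j} = q^n$; its Galois group over $\mathbb{Q}$ therefore embeds into the hyperoctahedral group $W = (\mathbb{Z}/2\mathbb{Z})^g \rtimes S_g$ of signed permutations of these pairs. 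I claim that when $g \geq 2$ and the Galois group is all of $W$, the curve $C/\F_{q^n}$ satisfies \textup{LI}; the case $g = 1$ I would treat separately.

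To prove the claim I would argue as follows. First, \textup{LI} fails for $C/\F_{q^n}$ precisely when $\sum_{j=1}^{g} n_j \theta(\gamma_j) \in \pi\mathbb{Z}$ for some $(n_1, \ldots, n_g) \in \mathbb{Z}^g \setminus \{0\}$, equivalently, since $\gamma_j/\overline{\gamma_j} = e^{2 i \theta(\gamma_j)}$, when $\prod_{j=1}^{g}(\gamma_j/\overline{\gamma_j})^{n_j} = 1$ for such a tuple; that is, when there is a nontrivial multiplicative relation among $\gamma_1/\overline{\gamma_1}, \ldots, \gamma_g/\overline{\gamma_g}$. Let $N \subseteq \mathbb{Z}^g$ be the group of all tuples $(n_j)$ giving such a relation. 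An element of the Galois group permutes the pairs $\{\gamma_j, \overline{\gamma_j}\}$, hence sends each $\gamma_j/\overline{\gamma_j}$ to $\gamma_{\sigma(j)}/\overline{\gamma_{\sigma(j)}}$ or to its inverse, and so carries one such relation to another; thus $N$ is stable under the resulting signed-permutation action of the Galois group on $\mathbb{Z}^g$. If that action is all of $W$, then $N \otimes \mathbb{Q}$ is a $W$-submodule of $\mathbb{Q}^g$ under the signed-permutation (reflection) representation of $W$, which is irreducible over $\mathbb{Q}$; hence either $N = 0$ and \textup{LI} holds, or $N$ has finite index in $\mathbb{Z}^g$. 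In the latter case each $\gamma_j/\overline{\gamma_j}$ is a root of unity, so from $\gamma_j^2 = q^n(\gamma_j/\overline{\gamma_j})$ we get $\gamma_j \in \mathbb{Q}(\sqrt{q^n}, \zeta_M)$ for a suitable $M$, and the splitting field of $P_{C/\F_{q^n}}$ is then abelian over $\mathbb{Q}$ --- contradicting the Galois group being $W$, which is nonabelian for $g \geq 2$. So $N = 0$ and \textup{LI} holds. (For $g = 1$, where $W = \mathbb{Z}/2\mathbb{Z}$ is abelian, I would instead argue that \textup{LI} fails only when $\theta(\gamma_1) \in \pi\mathbb{Q}$, which forces $\gamma_1^2/q^n$ to be a root of unity of degree at most $2$ over $\mathbb{Q}$ and hence the trace $\gamma_1 + \overline{\gamma_1}$ to take one of only $O(1)$ values; a standard count --- via the Deuring correspondence and the Hurwitz class number estimate, or from the classification behind Theorem~\ref{mainellcurvethm} --- then shows that such $C$ form a proportion $O(q^{-n/2 + o(1)})$ of $\HH_{3, q^n}$.)

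It then remains to show that, as $n \to \infty$, the proportion of $C \in \HH_{2g+1, q^n}$ whose numerator $P_{C/\F_{q^n}}$ fails to have Galois group all of $W$ tends to $0$; this is the substance of \cite{Chavdarov} and \cite{Kowalski}. The mechanism is a large sieve: for each proper subgroup $H \subsetneq W$, the condition that the Galois group lie in a conjugate of $H$ forces, for every prime $\ell \neq p$, the reduction modulo $\ell$ of the Frobenius conjugacy class of $C$ to avoid a subset of $\mathrm{Sp}(2g, \F_\ell)$ whose density is bounded away from zero; Deligne's equidistribution theorem --- available precisely because the geometric monodromy of the family is the full $\mathrm{Sp}(2g)$ --- shows each such local condition holds with density bounded away from $1$, and the large sieve inequality combines these over all $\ell$ up to a suitable power of $q^n$ to bound the exceptional proportion by a quantity tending to $0$ (in fact with a power saving in $q^n$). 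Since the set of $C$ failing \textup{LI} is contained in the set of $C$ whose numerator has Galois group strictly smaller than $W$, together with the negligible genus-one set above, this yields $\Prob_{\HH_{2g+1, q^n}}(\textup{$C$ satisfies LI}) \to 1$, as asserted.

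The hard part is this last, large-sieve step. It rests on two inputs --- the identification of the geometric monodromy group of the hyperelliptic family as $\mathrm{Sp}(2g)$, and Deligne's equidistribution theorem, which together force the Frobenius conjugacy classes of the family to equidistribute --- and then on organising the sieve so that, as the size $q^n$ of the field of definition grows, enough moduli $\ell$ become available to push the exceptional proportion to zero. This is exactly the regime ``$g$ fixed, $n \to \infty$'' of the theorem, and it is the portion of the argument already carried out in \cite{Chavdarov} and \cite{Kowalski}.
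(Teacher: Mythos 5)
The paper does not prove this theorem at all --- it is imported verbatim from Cha's Theorem 3.1, which in turn rests on Chavdarov and Kowalski --- and your sketch is a faithful reconstruction of exactly that argument: the reduction of \textup{LI} to the numerator $P_{C/\F_{q^n}}$ having Galois group the full hyperoctahedral group (via the $\mathbb{Q}$-irreducibility of the signed-permutation representation acting on the lattice of multiplicative relations among the $\gamma_j/\overline{\gamma_j}$, plus the non-abelianness of $W$ for $g \geq 2$), followed by the large sieve for Frobenius to show maximality of the Galois group for most curves. The only gloss worth flagging is that the equidistribution input to the sieve is the mod-$\ell$ statement (surjectivity of mod-$\ell$ monodromy onto $\mathrm{Sp}(2g,\F_\ell)$ together with the effective Chebotarev-type estimate from Weil II), not the Katz--Sarnak equidistribution in $\USp(2g)^{\#}$ used elsewhere in the paper; since you defer that step to \cite{Chavdarov} and \cite{Kowalski}, exactly as the paper does, this is not a gap.
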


This result is our reason for restricting ourselves to the family of hyperelliptic curves, as well as the usefulness of the following equidistribution theorem.

\begin{theorem}[Deligne's Equidistribution Theorem {\cite[Theorem 10.8.2]{Katz}}]
Let $f$ be a continuous function on $\USp(2g)$ that is central, so that $f$ is dependent only on the conjugacy class $\left(\theta_1, \ldots, \theta_g\right)$ of each matrix $U \in \USp(2g)$. Let $q$ and $g \geq 1$ be fixed. Then
	\[\lim_{n \to \infty} \frac{1}{\# \HH_{2g + 1, q^n}} \sum_{C \in \HH_{2g + 1, q^n}}{f\left(\vartheta\left(C/\F_{q^n}\right)\right)} = \int_{\USp(2g)}{f(U) \, d\mu_{\Haar}(U)},
\]
where $\mu_{\Haar}$ is the normalised Haar measure on $\USp(2g)$.
\end{theorem}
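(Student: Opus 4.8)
The plan is to run the general Deligne equidistribution method of Katz--Sarnak, whose only substantive geometric input for this family is a big monodromy statement. First I would reduce to the case of irreducible characters. By the Peter--Weyl theorem the characters $\chi_\Lambda$ of the irreducible representations $\Lambda$ of the compact group $\USp(2g)$ form an orthonormal basis of the space of square-integrable central functions, and their linear span is uniformly dense in the space of continuous central functions by Stone--Weierstrass. Since the averaging functionals $f \mapsto \#\HH_{2g+1,q^n}^{-1}\sum_{C}f\left(\vartheta\left(C/\F_{q^n}\right)\right)$ and the Haar integral are all bounded by $\|f\|_\infty$, a standard approximation argument reduces the theorem to the case $f = \chi_\Lambda$. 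For the trivial representation both sides equal $1$. For $\Lambda$ nontrivial irreducible, orthogonality of characters gives $\int_{\USp(2g)}\chi_\Lambda \, d\mu_{\Haar} = 0$, so it remains to prove
\[\lim_{n\to\infty}\frac{1}{\#\HH_{2g+1,q^n}}\sum_{C\in\HH_{2g+1,q^n}}\chi_\Lambda\left(\vartheta\left(C/\F_{q^n}\right)\right) = 0.\]

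Next I would package the family into an $\ell$-adic sheaf. Let $\HH_{2g+1}$ denote the scheme over $\F_q$ parametrising squarefree monic polynomials of degree $2g+1$, an open subscheme of $\mathbb{A}^{2g+1}$, geometrically irreducible of dimension $2g+1$, with $\HH_{2g+1}\left(\F_{q^n}\right) = \HH_{2g+1,q^n}$; fix a prime $\ell \neq p$. The universal hyperelliptic curve $y^2 = f(x)$ over $\HH_{2g+1}$ gives rise, through the first relative $\ell$-adic cohomology, to a lisse $\overline{\mathbb{Q}}_\ell$-sheaf $\FF$ of rank $2g$, pure of weight $1$, equipped with the perfect alternating Weil pairing; at a point $t$ corresponding to $C\in\HH_{2g+1,q^n}$ one has $\det\left(1 - u\,\mathrm{Frob}_{t,q^n}\mid\FF_t\right) = P_{C/\F_{q^n}}(u)$, so the semisimplification of $q^{-n/2}\,\mathrm{Frob}_{t,q^n}$ has eigenvalues on the unit circle in conjugate pairs, defining exactly the conjugacy class $\vartheta\left(C/\F_{q^n}\right)$ in $\USp(2g)$. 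The crucial input is that the geometric monodromy group of $\FF$, the Zariski closure of the image of $\pi_1$ of the geometric parameter space, is the full symplectic group $\mathrm{Sp}(2g)$; this is the big monodromy theorem for hyperelliptic families underlying the work of Cha and Katz--Sarnak, and it is the one genuinely nontrivial ingredient.

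With big monodromy in hand, set $\mathcal{G} = \Lambda(\FF)$, Tate-twisted to be pure of weight $0$. Because $\Lambda$ is a nontrivial irreducible representation and the geometric monodromy is all of $\mathrm{Sp}(2g)$, the lisse sheaf $\mathcal{G}$ is geometrically irreducible with no nonzero geometric monodromy coinvariants; hence $H^{2(2g+1)}_c\left(\HH_{2g+1,\overline{\F_q}},\mathcal{G}\right) = 0$, while $H^0_c$ vanishes since $\HH_{2g+1}$ is affine, geometrically connected, of positive dimension. By the Grothendieck--Lefschetz trace formula, and using that $\chi_\Lambda\left(\vartheta\left(C/\F_{q^n}\right)\right)$ equals the trace of $\mathrm{Frob}_{t,q^n}$ on $\mathcal{G}_t$,
\[\sum_{C\in\HH_{2g+1,q^n}}\chi_\Lambda\left(\vartheta\left(C/\F_{q^n}\right)\right) = \sum_{i=1}^{2(2g+1)-1}(-1)^i\,\mathrm{Tr}\left(\mathrm{Frob}_{q^n}\mid H^i_c\left(\HH_{2g+1,\overline{\F_q}},\mathcal{G}\right)\right).\]
The cohomology groups on the right do not depend on $n$ --- only the power of Frobenius does --- so their number and dimensions are constants depending only on $g$. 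By Deligne's Weil II, each eigenvalue of $\mathrm{Frob}_{q^n}$ on $H^i_c$ of the weight-$0$ sheaf $\mathcal{G}$ has absolute value at most $q^{ni/2} \leq q^{n(2g+1) - n/2}$. Hence the sum is $O\left(q^{n(2g+1) - n/2}\right)$ with implied constant independent of $n$. Since $\#\HH_{2g+1,q^n} = q^{n(2g+1)} - q^{2gn} \sim q^{n(2g+1)}$, dividing gives $O\left(q^{-n/2}\right) \to 0$, which finishes the proof.

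I expect the big monodromy computation of the second paragraph to be the main obstacle; everything else is the standard ``equidistribution from monodromy'' template, the only minor technical point being the uniformity in $n$ of the cohomological bounds, which is immediate because the groups $H^i_c\left(\HH_{2g+1,\overline{\F_q}},\mathcal{G}\right)$ are intrinsic to the geometric parameter space and only the Frobenius power varies with $n$. (If $\HH_{2g+1,q^n}$ is instead read as a set of curves taken up to isomorphism rather than as the parameter scheme, the two normalisations differ by lower-order terms that do not affect the limit; the statement is understood with the parameter-scheme normalisation, as in \cite{Cha}.)
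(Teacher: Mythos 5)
The paper does not actually prove this theorem; it is quoted verbatim from Katz--Sarnak \cite[Theorem 10.8.2]{Katz}, and your sketch is a faithful outline of the argument given there: reduction to nontrivial irreducible characters via Peter--Weyl, the big-monodromy theorem for the hyperelliptic family (correctly identified as the one substantive geometric input), and the Grothendieck--Lefschetz trace formula combined with Weil II to show that everything below the vanishing top compactly-supported cohomology contributes $O\left(q^{-n/2}\right)$ after normalisation. Your outline is correct, including the uniformity in $n$ and the remark about the parameter-scheme normalisation of $\HH_{2g+1,q^n}$.
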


Equivalently, consider the sequence of probability measures
	\[\Prob_{\HH_{2g + 1, q^n}} = \frac{1}{\# \HH_{2g + 1, q^n}} \sum_{C \in \HH_{2g + 1, q^n}}{\delta_{\vartheta\left(C/\F_{q^n}\right)}}
\]
on $\USp(2g)$, where $\delta_{U^{\#}}$ is a point mass at a conjugacy class $U^{\#} \in \USp(2g)^{\#}$. Then Deligne's equidistribution theorem merely states that the sequence of probability measures $\Prob_{\HH_{2g + 1, q^n}}$ converges weakly to the probability measure
	\[\Prob_{\USp(2g)} = \mu_{\Haar}
\]
as $n$ tends to infinity. By applying the Portmanteau theorem to the sequence of probability measures $\Prob_{\HH_{2g + 1, q^n}}$, we obtain an equivalent reformulation of Deligne's equidistribution theorem.

\begin{corollary}\label{Delignecor}
For fixed $g \geq 1$,
	\[\lim_{n \to \infty} \Prob_{\HH_{2g + 1, q^n}}\left(\vartheta\left(C/\F_{q^n}\right) \in A\right) = \Prob_{\USp(2g)}\left(U \in A\right)
\]
for any Borel set $A \subset \USp(2g)$ whose boundary has Haar measure zero.
\end{corollary}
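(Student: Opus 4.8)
The plan is to recognise the corollary as the ``continuity set'' clause of the Portmanteau theorem applied to the weak convergence supplied by Deligne's equidistribution theorem, the only subtlety being that the measures at hand live most naturally on the space of conjugacy classes rather than on $\USp(2g)$ itself. Since each $\delta_{\vartheta\left(C/\F_{q^n}\right)}$ is a point mass at a conjugacy class, $\Prob_{\HH_{2g + 1, q^n}}$ is by construction a Borel probability measure on the compact metric space $\USp(2g)^{\#} \cong [0,\pi]^g$, and we regard $\Prob_{\USp(2g)} = \mu_{\Haar}$ the same way, via its image under the quotient map $\pi \colon \USp(2g) \to \USp(2g)^{\#}$. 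A central continuous function on $\USp(2g)$ is precisely a continuous function on $\USp(2g)^{\#}$ composed with $\pi$, so Deligne's equidistribution theorem --- the convergence of the integrals of every central continuous test function --- says exactly that $\Prob_{\HH_{2g + 1, q^n}} \to \mu_{\Haar}$ weakly on $\USp(2g)^{\#}$.

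I would then apply the Portmanteau theorem on the compact metric space $\USp(2g)^{\#}$: weak convergence $\mu_n \to \mu$ of Borel probability measures is equivalent to $\mu_n(B) \to \mu(B)$ for every Borel set $B$ with $\mu(\partial B) = 0$. This is standard; one approximates the indicator of an open set from below by continuous functions to obtain $\liminf_n \mu_n(G) \geq \mu(G)$ for open $G$, applies this to $G = B^{\circ}$ and to $G = \USp(2g)^{\#} \setminus \overline{B}$, and then sandwiches $\mu(B^{\circ}) \leq \liminf_n \mu_n(B) \leq \limsup_n \mu_n(B) \leq \mu(\overline{B})$, so that $\mu(\partial B) = 0$ forces the limit to exist and equal $\mu(B)$. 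I would simply cite this rather than reproduce it.

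Finally I would transport the conclusion back to the stated form in terms of $A \subset \USp(2g)$. Since $\Prob_{\HH_{2g + 1, q^n}}\left(\vartheta\left(C/\F_{q^n}\right) \in A\right)$ sees only conjugacy classes, there is no loss in assuming $A$ conjugation-invariant, so that $A = \pi^{-1}(B)$ with $B = \pi(A)$; then $\Prob_{\HH_{2g + 1, q^n}}\left(\vartheta\left(C/\F_{q^n}\right) \in A\right) = \Prob_{\HH_{2g + 1, q^n}}(B)$ and $\Prob_{\USp(2g)}(U \in A) = \mu_{\Haar}(B)$. The map $\pi$ is continuous, open (being the quotient by a group action, so $\pi^{-1}(\pi(V)) = \bigcup_{U \in \USp(2g)} U V U^{-1}$ is open whenever $V$ is), and closed (being a continuous map from a compact space to a Hausdorff space); consequently, for a saturated set, taking preimages commutes with closure and with interior, whence $\partial A = \pi^{-1}(\partial B)$ and the hypothesis $\mu_{\Haar}(\partial A) = 0$ is precisely $\mu_{\Haar}(\partial B) = 0$. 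Applying the previous step to $B$ gives the result. The only point needing genuine care is this passage between $\USp(2g)$ and its space of conjugacy classes; the probabilistic content is entirely routine once the right space has been fixed, so there is no real obstacle here.
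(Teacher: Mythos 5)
Your proposal is correct and follows the same route as the paper, which simply cites the Portmanteau theorem applied to the weak convergence supplied by Deligne's equidistribution theorem. You are more careful than the paper about the passage between $\USp(2g)$ and its space of conjugacy classes $\USp(2g)^{\#}$ — the paper silently treats the point masses $\delta_{\vartheta(C/\F_{q^n})}$ as measures on $\USp(2g)$, whereas your reduction to conjugation-invariant $A$, together with the observation that the quotient map is continuous, open, and closed so that $\partial A = \pi^{-1}(\partial B)$ for saturated $A$, is the honest way to make the statement precise; this is a genuine improvement in rigour rather than a different approach.
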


\begin{remark}
In fact, Deligne's equidistribution theorem holds not only for fixed $q$ in the limit as $n$ tends to infinity, but rather for \emph{any} sequence of prime powers $q$ tending to infinity, with $\HH_{2g + 1, q}$ in place of $\HH_{2g + 1, q^n}$. However, Theorem \ref{Kowalskithm} requires the restriction that $q$ be fixed, which is why we have this condition in Theorem \ref{mainthm}. It would be of interest to determine whether this restriction could be removed, so that Theorem \ref{mainthm} would hold for any sequence of prime powers $q$ tending to infinity.
\end{remark}

One can calculate the Haar measure for $\USp(2g)$ precisely by using the following formula to convert it into an integral over $[0,\pi]^g$.

\begin{proposition}[Weyl Integration Formula {\cite[\S 5.0.4]{Katz}}]
Let $f$ be a bounded, Borel-measurable complex-valued central function on $\USp(2g)$. Then
	\[\int_{\USp(2g)}{f(U) \, d\mu_{\Haar}(U)} = \int^{\pi}_{0} \hspace{-0.2cm} \cdots \hspace{-0.1cm} \int^{\pi}_{0} f\left(\theta_1, \ldots, \theta_g\right) \, d\mu_{\USp}\left(\theta_1, \ldots, \theta_g\right),
\]
where
\begin{equation}\label{muUSp}
d\mu_{\USp}\left(\theta_1, \ldots, \theta_g\right) = \frac{2^{g^2}}{g! \pi^g} \prod_{1 \leq j < k \leq g}{\left(\cos \theta_k - \cos \theta_j\right)^2} \prod^{g}_{\ell = 1}{\sin^2 \theta_{\ell}} \, d\theta_1 \cdots d\theta_g.
\end{equation}
\end{proposition}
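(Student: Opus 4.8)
The stated formula is the special case $G=\USp(2g)$ of the general Weyl integration formula, so the plan is to quote that formula and then carry out the root-system bookkeeping specific to the symplectic group. Recall that for a compact connected Lie group $G$ with maximal torus $T$, Weyl group $W$, and set of positive roots $\Phi^{+}$, the conjugation map $(xT,t)\mapsto xtx^{-1}$ from $(G/T)\times T$ to $G$ is surjective and generically $|W|$-to-one, with Jacobian equal to $|\delta(t)|^{2}:=\prod_{\alpha\in\Phi^{+}}\left|1-\alpha(t)\right|^{2}$ (the absolute value of the determinant of $\mathrm{Ad}(t)-1$ acting on $\mathfrak g/\mathfrak t$); the change-of-variables formula then yields, for every central function $f$ on $G$,
\[\int_{G}f(g)\,d\mu_{\Haar}(g)=\frac{1}{|W|}\int_{T}f(t)\,|\delta(t)|^{2}\,dt,\]
with $d\mu_{\Haar}$ and $dt$ the normalised Haar measures on $G$ and $T$. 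This is exactly \cite[\S 5.0.4]{Katz}. It then remains to identify $T$, $W$, and $|\delta|^{2}$ for $\USp(2g)$ and to track the constants.

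For $\USp(2g)$, a maximal torus is given by the diagonal matrices $\mathrm{diag}\left(e^{i\theta_{1}},\dots,e^{i\theta_{g}},e^{-i\theta_{1}},\dots,e^{-i\theta_{g}}\right)$, so $T\cong(\mathbb{R}/2\pi\mathbb{Z})^{g}$ with coordinates $(\theta_{1},\dots,\theta_{g})$; the root system is of type $C_{g}$, whose positive roots are $\theta_{j}-\theta_{k}$ and $\theta_{j}+\theta_{k}$ for $1\le j<k\le g$ together with $2\theta_{j}$ for $1\le j\le g$, and the Weyl group is the group of signed permutations of the $\theta_{j}$, of order $|W|=2^{g}g!$. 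Thus
\[|\delta(\theta)|^{2}=\prod_{1\le j<k\le g}\left|1-e^{i(\theta_{j}-\theta_{k})}\right|^{2}\left|1-e^{i(\theta_{j}+\theta_{k})}\right|^{2}\;\prod_{\ell=1}^{g}\left|1-e^{2i\theta_{\ell}}\right|^{2}.\]
Now I would simplify each factor by elementary trigonometry: $\left|1-e^{2i\theta}\right|^{2}=2-2\cos 2\theta=4\sin^{2}\theta$, while expanding the product over the pair $\{\theta_{j}-\theta_{k},\theta_{j}+\theta_{k}\}$ and using $\cos(\theta_{j}-\theta_{k})+\cos(\theta_{j}+\theta_{k})=2\cos\theta_{j}\cos\theta_{k}$ together with $\cos(\theta_{j}-\theta_{k})\cos(\theta_{j}+\theta_{k})=\cos^{2}\theta_{j}+\cos^{2}\theta_{k}-1$ gives $\left|1-e^{i(\theta_{j}-\theta_{k})}\right|^{2}\left|1-e^{i(\theta_{j}+\theta_{k})}\right|^{2}=4(\cos\theta_{j}-\cos\theta_{k})^{2}$. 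Collecting the $\binom{g}{2}+g=\tfrac{1}{2}(g^{2}+g)$ factors of $4$ produces $|\delta(\theta)|^{2}=2^{g^{2}+g}\prod_{1\le j<k\le g}(\cos\theta_{k}-\cos\theta_{j})^{2}\prod_{\ell=1}^{g}\sin^{2}\theta_{\ell}$.

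Finally I would pass from $T=(\mathbb{R}/2\pi\mathbb{Z})^{g}$ to $[0,\pi]^{g}$ and assemble the normalising constant. Since $f$ is central it is $W$-invariant, and $|\delta|^{2}$ is plainly $W$-invariant, so the integrand $f(\theta)|\delta(\theta)|^{2}$ is even in each $\theta_{j}$; writing $dt=(2\pi)^{-g}d\theta_{1}\cdots d\theta_{g}$ over $(-\pi,\pi]^{g}$ and folding each coordinate onto $[0,\pi]$ contributes a factor $2^{g}$, so that $\int_{T}f|\delta|^{2}\,dt=\pi^{-g}\int_{[0,\pi]^{g}}f(\theta)|\delta(\theta)|^{2}\,d\theta$. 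Substituting the expression for $|\delta(\theta)|^{2}$ and dividing by $|W|=2^{g}g!$ gives the constant $2^{g^{2}+g}/(2^{g}g!\,\pi^{g})=2^{g^{2}}/(g!\,\pi^{g})$, which is precisely the measure \eqref{muUSp}. This establishes the identity for continuous central $f$; since it says that $\mu_{\USp}$ is the push-forward of $\mu_{\Haar}$ under the Borel map $U\mapsto(\theta_{1},\dots,\theta_{g})$ to $[0,\pi]^{g}$, it extends to all bounded Borel-measurable central $f$ by a routine approximation argument, which is the form stated.

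The only genuinely nontrivial input is the general Weyl integration formula itself — equivalently, the identification of the Jacobian of the conjugation map with $|\delta(t)|^{2}$ — which is classical and which I would simply quote from \cite{Katz}. Everything specific to $\USp(2g)$ is elementary: the $C_{g}$ root data, the trigonometric collapse of $\left|1-e^{i(\theta_{j}-\theta_{k})}\right|^{2}\left|1-e^{i(\theta_{j}+\theta_{k})}\right|^{2}$ to $4(\cos\theta_{j}-\cos\theta_{k})^{2}$, and the bookkeeping of the powers of $2$ and $\pi$ (including the factor $2^{g}$ from folding $(-\pi,\pi]^{g}$ down to $[0,\pi]^{g}$). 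The constant-chasing is the one place where a slip is likely, so I would sanity-check the outcome at $g=1$: there $\USp(2)\cong\mathrm{SU}(2)$, the products over $j<k$ are empty, and \eqref{muUSp} reduces to $\tfrac{2}{\pi}\sin^{2}\theta\,d\theta$ on $[0,\pi]$, the Sato--Tate measure, as it must.
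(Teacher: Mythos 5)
The paper does not prove this proposition; it is quoted verbatim from Katz--Sarnak \cite[\S 5.0.4]{Katz} as a known result. Your derivation is the standard one (and essentially the content of the cited reference): the general Weyl integration formula plus the $C_g$ root data, and your trigonometric simplifications, the count of $\tfrac{1}{2}(g^2+g)$ factors of $4$, the factor $2^g$ from folding $(-\pi,\pi]^g$ onto $[0,\pi]^g$, the division by $|W|=2^g g!$, and the $g=1$ sanity check against $\tfrac{2}{\pi}\sin^2\theta\,d\theta$ are all correct, so the constant $2^{g^2}/(g!\,\pi^g)$ comes out right.
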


To make use of Corollary \ref{Delignecor}, we need to ensure that the boundaries of the sets with which we work have Haar measure zero.

\begin{lemma}\label{muboundary}
Let $A$ be an interval in $\R$. Then the boundary of the set
	\[\left\{U \in \USp(2g) : \varphi(U) \in A\right\}
\]
has Haar measure zero.
\end{lemma}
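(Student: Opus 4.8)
The plan is to separate the locus where $\varphi$ is ill-behaved from the rest, and to reduce the remaining claim to the fact that a non-constant real-analytic function has level sets of Lebesgue measure zero. Write $E_A = \{U \in \USp(2g) : \varphi(U) \in A\}$, and let $Z \subset \USp(2g)$ be the closed set of matrices having a repeated eigenvalue; in terms of eigenangles, $U \in Z$ precisely when some $\theta_j \in \{0,\pi\}$ or $\theta_j = \theta_k$ for some $j \neq k$. As noted in the text, $\mu_{\Haar}(Z) = 0$, and on $\USp(2g) \setminus Z$ the function $\varphi$ is finite and continuous, since it depends continuously on the unordered tuple of eigenangles, which on this set avoids the points where a denominator $\left|{\ZZ_U}'(\theta_j)\right|$ vanishes. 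The first step would be to show
\[
\partial E_A \subseteq Z \cup \varphi^{-1}(\partial A),
\]
where $\partial A$ denotes the boundary of $A$ in $\R$: if $U_0 \in \partial E_A$ with $U_0 \notin Z$, then choosing a sequence in $E_A$ and a sequence in $\USp(2g) \setminus E_A$, both converging to $U_0$, and using the continuity of $\varphi$ at $U_0$, one gets $\varphi(U_0) \in \overline{A} \cap \overline{\R \setminus A} = \partial A$. Since $A$ is an interval, $\partial A$ is contained in the set of its (at most two) finite endpoints.

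Granting this, it remains to prove that $\mu_{\Haar}\bigl(\varphi^{-1}(c)\bigr) = 0$ for every real $c$. I would transfer the computation to $[0,\pi]^g$ via the Weyl integration formula, applied to the bounded central function $U \mapsto \mathbb{1}[\varphi(U) = c]$: then $\mu_{\Haar}\bigl(\varphi^{-1}(c)\bigr)$ equals the integral over $\{(\theta_1,\dots,\theta_g) \in [0,\pi]^g : \varphi = c\}$ of the density appearing in \eqref{muUSp}, which is continuous hence bounded on $[0,\pi]^g$; so it suffices that this subset of $[0,\pi]^g$ have Lebesgue measure zero. Away from the measure-zero locus where some $\theta_j \in \{0,\pi\}$ or $\theta_i = \theta_j$ with $i \neq j$, each $e^{i\theta_j}$ is a simple eigenvalue, so by \eqref{charpol} one has $\ZZ_U'(\theta_j) = i \prod_{k \neq j}\bigl(1 - e^{i(\theta_k - \theta_j)}\bigr) \neq 0$, the product running over the $2g$ eigenangles $\theta_1,\dots,\theta_g,-\theta_1,\dots,-\theta_g$; hence $\varphi$ is real-analytic there. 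On each connected component of this open set $\varphi$ is non-constant: holding $\theta_2,\dots,\theta_g$ fixed and distinct in $(0,\pi)$ and letting $\theta_1 \to 0^+$ drives the $j = 1$ summand of $\varphi$ to $+\infty$ while the remaining summands stay bounded, so $\varphi$ is unbounded on that component. A non-constant real-analytic function on a connected open subset of $\R^g$ has zero set of Lebesgue measure zero (apply this to $\varphi - c$); summing over the finitely many components and adjoining back the null locus removed above gives the claim. Combining the two steps, $\mu_{\Haar}(\partial E_A) \leq \mu_{\Haar}(Z) + \sum_{c \in \partial A} \mu_{\Haar}\bigl(\varphi^{-1}(c)\bigr) = 0$.

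The hard part is the vanishing of the level sets of $\varphi$, and within that the only genuine input is that $\varphi$ is real-analytic and \emph{non-constant} on the locus of distinct eigenangles in $(0,\pi)^g$; everything else is point-set topology together with a bookkeeping application of the Weyl integration formula. Note that the hypothesis that $A$ is an interval is used only to guarantee that $\partial A$ is finite, so that merely finitely many level sets of $\varphi$ enter the estimate; for a general Borel set $A$ this reduction would break down.
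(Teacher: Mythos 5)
Your proof is correct and follows essentially the same route as the paper: discard the $\mu_{\Haar}$-null locus of repeated eigenangles, observe that off this locus the boundary of $E_A$ lands in $\varphi^{-1}(\partial A)$ by continuity, transfer to $[0,\pi]^g$ via the Weyl integration formula and absolute continuity of $\mu_{\USp}$, and invoke the fact that a non-constant real-analytic function on a connected open set has Lebesgue-null level sets. The only material difference is that you supply an explicit justification (the unboundedness of $\varphi$ as $\theta_1 \to 0^+$ with the other angles held fixed) for the non-constancy of $\varphi$ on each Weyl chamber, a point the paper's proof asserts without argument.
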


\begin{proof}
By differentiating \eqref{charpol}, we have that
\begin{equation}\label{varphitheta}
\varphi(U) = \varphi\left(\theta_1, \ldots, \theta_g\right) = \frac{1}{2^{g - 1}} \sum^{g}_{j = 1}{\cosec \theta_j \prod^{g}_{\substack{k = 1 \\ k \neq j}}{\frac{1}{\left|\cos \theta_k - \cos \theta_j\right|}}}.
\end{equation}
So by the Weyl integration formula, we must show that for any interval $A$, the boundary of the set
	\[\left\{\left(\theta_1, \ldots, \theta_g\right) \in [0,\pi]^g : \varphi\left(\theta_1, \ldots, \theta_g\right) \in A\right\}
\]
has $\mu_{\USp}$-measure zero. Observe that $\mu_{\USp}$ is absolutely continuous with respect to the Lebesgue measure on $[0,\pi]^g$, and hence the sets
	\[\left\{\left(\theta_1, \ldots, \theta_g\right) \in [0,\pi]^g : \textup{$\theta_j = \theta_k$ for some $1 \leq j < k \leq g$}\right\}
\]
and
	\[\left\{\left(\theta_1, \ldots, \theta_g\right) \in [0,\pi]^g : \textup{$\theta_j \in \{0,\pi\}$ for some $1 \leq j \leq g$}\right\}
\]
have $\mu_{\USp}$-measure zero; furthermore, for each permutation $\sigma$ of $\{1, \ldots, g\}$, the function $\varphi$ is continuous on the set
	\[\left\{\left(\theta_1, \ldots, \theta_g\right) \in [0,\pi]^g : 0 < \theta_{\sigma(1)} < \ldots < \theta_{\sigma(g)} < \pi\right\}.
\]
It therefore suffices to show that for each $a \in \R$ and for each $\sigma \in S_g$, the set
	\[\left\{\left(\theta_1, \ldots, \theta_g\right) \in [0,\pi]^g : \varphi\left(\theta_1, \ldots, \theta_g\right) = a, \; 0 < \theta_{\sigma(1)} < \ldots < \theta_{\sigma(g)} < \pi\right\}
\]
has $\mu_{\USp}$-measure zero. But in the region where $0 < \theta_{\sigma(1)} < \ldots < \theta_{\sigma(g)} < \pi$, the expression \eqref{varphitheta} shows that the function $\varphi\left(\theta_1, \ldots, \theta_g\right)$ is not only continuous but real analytic and non-uniformly constant. As the zero set of a non-uniformly zero real analytic function has Lebesgue measure zero, and $\mu_{\USp}$ is absolutely continuous with respect to the Lebesgue measure, we obtain the result.
\end{proof}

\section{Proof of Theorem \ref{mainthm}}

We have now developed the necessary machinery needed in order to study the limit as $n$ tends to infinity of the average
	\[\Prob_{\HH_{2g + 1, q^n}}\left(\textup{$C$ satisfies the $\beta$-Mertens conjecture}\right).
\]
For brevity's sake, we write this average as
	\[\Prob_{\HH_{2g + 1, q^n}}\left(B\left(C/\F_{q^n}\right) \leq \beta\right).
\]
We also write $C \in \textup{LI}$ if $C$ satisfies LI, and conversely if $C$ does not satisfy LI, we write $C \notin \textup{LI}$.

\begin{proposition}\label{Mertensaverage}
For $\beta > 0$, we have that
\begin{equation}\label{Mertensaverageeq}
\lim_{n \to \infty} \Prob_{\HH_{2g + 1, q^n}}\left(B\left(C/\F_{q^n}\right) \leq \beta\right) = \Prob_{\USp(2g)}\left(\varphi(U) \leq \beta\right).
\end{equation}
\end{proposition}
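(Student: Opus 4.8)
The plan is to run $B(C/\F_{q^n})$ through Cha's sandwich bound (Lemma~\ref{muBqasymp}) so as to replace it by $\varphi(\vartheta(C/\F_{q^n}))$, to discard the rare curves that fail LI via Theorem~\ref{Kowalskithm}, and then to apply Deligne's equidistribution theorem in the form of Corollary~\ref{Delignecor}, with Lemma~\ref{muboundary} supplying the necessary boundary estimate. Concretely: since $\Prob_{\HH_{2g+1,q^n}}(C \notin \textup{LI}) \to 0$ by Theorem~\ref{Kowalskithm}, the probability $\Prob_{\HH_{2g+1,q^n}}(B(C/\F_{q^n}) \le \beta)$ differs by $o(1)$ from $\Prob_{\HH_{2g+1,q^n}}(B(C/\F_{q^n}) \le \beta \text{ and } C \in \textup{LI})$, so it suffices to understand the latter. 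For a curve over $\F_{q^n}$ satisfying LI, Lemma~\ref{muBqasymp} applied with $q^n$ in place of $q$ gives
\[
\bigl(1 - q^{-n/2}\bigr)\,\varphi\bigl(\vartheta(C/\F_{q^n})\bigr) \;\le\; B(C/\F_{q^n}) \;\le\; \bigl(1 + q^{-n/2}\bigr)\,\varphi\bigl(\vartheta(C/\F_{q^n})\bigr),
\]
so that, abbreviating $\varphi = \varphi(\vartheta(C/\F_{q^n}))$,
\[
\bigl\{\varphi \le \tfrac{\beta}{1 + q^{-n/2}}\bigr\} \cap \{C \in \textup{LI}\} \;\subseteq\; \{B \le \beta\} \cap \{C \in \textup{LI}\} \;\subseteq\; \bigl\{\varphi \le \tfrac{\beta}{1 - q^{-n/2}}\bigr\}.
\]

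The one structural wrinkle is that the cut-offs $\beta/(1 \pm q^{-n/2})$ move with $n$, whereas Corollary~\ref{Delignecor} requires a fixed Borel set. I would handle this with an $\e$-sandwich: given $\e > 0$, for all sufficiently large $n$ one has $q^{-n/2} < \e$, so the moving events above are trapped between the fixed events $A_\e^{\pm} = \{U \in \USp(2g) : \varphi(U) \le \beta/(1 \pm \e)\}$. Each $A_\e^{\pm}$ has boundary of Haar measure zero by Lemma~\ref{muboundary} (applied to the interval $(-\infty,\,\beta/(1\pm\e)]$), so Corollary~\ref{Delignecor} applies to it. Combining this with the two inclusions displayed above and with $\Prob_{\HH_{2g+1,q^n}}(C \notin \textup{LI}) = o(1)$, I obtain, for every fixed $\e > 0$,
\begin{align*}
\Prob_{\USp(2g)}\!\Bigl(\varphi(U) \le \tfrac{\beta}{1 + \e}\Bigr) &\le \liminf_{n \to \infty} \Prob_{\HH_{2g+1,q^n}}\!\bigl(B(C/\F_{q^n}) \le \beta\bigr) \\
&\le \limsup_{n \to \infty} \Prob_{\HH_{2g+1,q^n}}\!\bigl(B(C/\F_{q^n}) \le \beta\bigr) \le \Prob_{\USp(2g)}\!\Bigl(\varphi(U) \le \tfrac{\beta}{1 - \e}\Bigr).
\end{align*}

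Finally I would let $\e \to 0$, which collapses the sandwich to the asserted limit provided the distribution function $t \mapsto \Prob_{\USp(2g)}(\varphi(U) \le t)$ is continuous at $t = \beta$, i.e.\ provided $\Prob_{\USp(2g)}(\varphi(U) = \beta) = 0$. This no-atom statement is exactly what the computation in the proof of Lemma~\ref{muboundary} delivers: off the $\mu_{\USp}$-null walls $\{\theta_j = \theta_k\}$ and $\{\theta_j \in \{0,\pi\}\}$, formula~\eqref{varphitheta} shows that $\varphi$ is real analytic and non-constant on each open Weyl chamber $\{0 < \theta_{\sigma(1)} < \cdots < \theta_{\sigma(g)} < \pi\}$, so its $\beta$-level set is Lebesgue-null there, and absolute continuity of $\mu_{\USp}$ with respect to Lebesgue measure then gives $\mu_{\USp}(\{\varphi = \beta\}) = 0$. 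Overall this proposition is mostly a matter of correctly assembling the quoted results; the only two points needing genuine care are the passage through the moving cut-offs (handled by the $\e$-sandwich) and the no-atom fact at $\beta$ (handled by borrowing the real-analyticity argument from Lemma~\ref{muboundary}), of which the latter is the more delicate.
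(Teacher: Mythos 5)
Your proposal is correct and follows essentially the same strategy as the paper's proof: discard the non-LI curves via Theorem~\ref{Kowalskithm}, replace $B(C/\F_{q^n})$ with $\varphi(\vartheta(C/\F_{q^n}))$ using Cha's two-sided bound, freeze the $n$-dependent thresholds with a fixed $\e>0$, apply Deligne's equidistribution theorem (Corollary~\ref{Delignecor}) together with Lemma~\ref{muboundary}, and finally send $\e\to 0$ using the fact that $\Prob_{\USp(2g)}(\varphi(U)=\beta)=0$, which is supplied by Lemma~\ref{muboundary}. The only difference is cosmetic bookkeeping: the paper carries out an explicit eight-set disjoint decomposition of $\{B\le\beta\}$ and $\{\varphi\le\beta\}$ and bounds $\#A_1-\#A_{1'}$ directly, whereas you run a two-sided sandwich between $\{\varphi\le\beta/(1+\e)\}$ and $\{\varphi\le\beta/(1-\e)\}$ and pass to $\liminf/\limsup$; both reduce to the same three inputs and the same no-atom condition at $\beta$.
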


Note that \eqref{Mertensaverageeq} is equivalent to
\begin{equation}\label{Mertensaverageeq2}
\lim_{n \to \infty} \Prob_{\HH_{2g + 1, q^n}}\left(B\left(C/\F_{q^n}\right) > \beta\right) = \Prob_{\USp(2g)}\left(\varphi(U) > \beta\right).
\end{equation}
for $\beta > 0$. If we let
	\[B^T\left(C/\F_q\right) = \min\left\{B\left(C/\F_q\right), T\right\}
\]
for $T > 0$, then by integrating \eqref{Mertensaverageeq2} with respect to $\beta$ from $0$ to $T$ and taking the limit as $T$ tends to infinity, we find via the dominated convergence theorem and Fubini's theorem that
	\[\lim_{T \to \infty} \lim_{n \to \infty} \frac{1}{\#\HH_{2g + 1, q^n}} \sum_{C \in \HH_{2g + 1, q^n}}{B^T\left(C/\F_{q^n}\right)} = \int_{\USp(2g)}{\varphi(U) \, d\mu_{\Haar}(U)},
\]
thereby obtaining a slightly modified version of a result of Cha \cite[Theorem 3.3]{Cha}.

\begin{proof}
For any $\e > 0$ with $\e < \beta$, let
\begin{align*}
A & = \left\{C \in \HH_{2g + 1, q^n} : B\left(C/\F_{q^n}\right) \leq \beta, \; \varphi\left(\vartheta\left(C/\F_{q^n}\right)\right) \leq \beta, \; C \in \textup{LI}\right\},	\\
A_1 & = \left\{C \in \HH_{2g + 1, q^n} : B\left(C/\F_{q^n}\right) \leq \beta\right\},	\\
A_2 & = \left\{C \in \HH_{2g + 1, q^n} : B\left(C/\F_{q^n}\right) \leq \beta, \; C \notin \textup{LI}\right\},	\\
A_3 & = \left\{C \in \HH_{2g + 1, q^n} : B\left(C/\F_{q^n}\right) \leq \beta, \; \beta < \varphi\left(\vartheta\left(C/\F_{q^n}\right)\right) \leq \beta + \e, \; C \in \textup{LI}\right\},	\\
A_4 & = \left\{C \in \HH_{2g + 1, q^n} : B\left(C/\F_{q^n}\right) \leq \beta, \; \varphi\left(\vartheta\left(C/\F_{q^n}\right)\right) > \beta + \e, \; C \in \textup{LI}\right\},	\\
A_{1'} & = \left\{C \in \HH_{2g + 1, q^n} : \varphi\left(\vartheta\left(C/\F_{q^n}\right)\right) \leq \beta\right\},	\\
A_{2'} & = \left\{C \in \HH_{2g + 1, q^n} : \varphi\left(\vartheta\left(C/\F_{q^n}\right)\right) \leq \beta, \; C \notin \textup{LI}\right\},	\\
A_{3'} & = \left\{C \in \HH_{2g + 1, q^n} : B\left(C/\F_{q^n}\right) > \beta, \; \beta - \e \leq \varphi\left(\vartheta\left(C/\F_{q^n}\right)\right) \leq \beta, \; C \in \textup{LI}\right\},	\\
A_{4'} & = \left\{C \in \HH_{2g + 1, q^n} : B\left(C/\F_{q^n}\right) > \beta, \; \varphi\left(\vartheta\left(C/\F_{q^n}\right)\right) < \beta - \e, \; C \in \textup{LI}\right\}.
\end{align*}
Then we have that
\begin{align*}
A_1 & = A \sqcup A_2 \sqcup A_3 \sqcup A_4,	\\
A_{1'} & = A \sqcup A_{2'} \sqcup A_{3'} \sqcup A_{4'},
\end{align*}
where the unions are all disjoint, and consequently
	\[\# A_1 = \# A_{1'} + \# A_2 - \# A_{2'} + \# A_3 - \# A_{3'} + \# A_4 - \# A_{4'}.
\]
By Deligne's equidistribution theorem and Lemma \ref{muboundary},
	\[\lim_{n \to \infty} \frac{\# A_{1'}}{\# \HH_{2g + 1, q^n}} = \Prob_{\USp(2g)}\left(\varphi(U) \leq \beta\right),
\]
while Theorem \ref{Kowalskithm} implies that
	\[\lim_{n \to \infty} \frac{\# A_2}{\# \HH_{2g + 1, q^n}} = \lim_{n \to \infty} \frac{\# A_{2'}}{\# \HH_{2g + 1, q^n}} = 0.
\]
Next, we note that
	\[A_3 \sqcup A_{3'} \subset \left\{C \in \HH_{2g + 1, q^n} : \beta - \e \leq \varphi\left(\vartheta\left(C/\F_{q^n}\right)\right) \leq \beta + \e\right\},
\]
and hence
	\[\limsup_{n \to \infty} \frac{\left|\# A_3 - \# A_{3'}\right|}{\# \HH_{2g + 1, q^n}} \leq \Prob_{\USp(2g)}\left(\beta - \e \leq \varphi(U) \leq \beta + \e\right).
\]
by Deligne's equidistribution theorem and Lemma \ref{muboundary}. Finally, Lemma \ref{muBqasymp} implies that
	\[A_4 \subset \left\{C \in \HH_{2g + 1, q^n} : \beta + \e < \varphi\left(\vartheta\left(C/\F_{q^n}\right)\right) \leq \frac{1}{1 - q^{-n/2}} \beta, \; C \in \textup{LI}\right\},
\]
which is empty for all $n \geq 2 \log_q(\beta/\e + 1)$, and similarly that
	\[A_{4'} \subset \left\{C \in \HH_{2g + 1, q^n} : \frac{1}{1 + q^{-n/2}} \beta < \varphi\left(\vartheta\left(C/\F_{q^n}\right)\right) <  \beta - \e, \; C \in \textup{LI}\right\},
\]
which is empty for all $n \geq 2 \log_q(\beta/\e - 1)$. So
	\[\lim_{n \to \infty} \frac{\# A_4}{\# \HH_{2g + 1, q^n}} = \lim_{n \to \infty} \frac{\# A_{4'}}{\# \HH_{2g + 1, q^n}} = 0.
\]
So we have shown that for any $\e > 0$ with $\e < \beta$,
\begin{multline*}
\limsup_{n \to \infty} \left|\Prob_{\HH_{2g + 1, q^n}}\left(B\left(C/\F_{q^n}\right) \leq \beta\right) - \Prob_{\USp(2g)}\left(\varphi(U) \leq \beta\right)\right|	\\
\leq \Prob_{\USp(2g)}\left(\beta - \e \leq \varphi(U) \leq \beta + \e\right).
\end{multline*}
As $\e > 0$ was arbitrary, and
	\[\lim_{\e \to 0} \Prob_{\USp(2g)}\left(\beta - \e \leq \varphi(U) \leq \beta + \e\right) = \Prob_{\USp(2g)}\left(\varphi(U) = \beta\right) = 0
\]
by Lemma \ref{muboundary}, we obtain the result.
\end{proof}

So in order to prove Theorem \ref{mainthm}, we must show that for each fixed $g \geq 1$,
	\[\Prob_{\USp(2g)}\left(\varphi(U) \leq \beta\right) = 0
\]
for $0 < \beta \leq 1$, whereas for any $\beta > 1$,
	\[0 < \Prob_{\USp(2g)}\left(\varphi(U) \leq \beta\right) < 1.
\]
This follows from the following result, which we prove in Section \ref{unitarysection} in a more general form.

\begin{proposition}\label{unitarysympthm}
Let $U \in \USp(2g)$ be a unitary symplectic matrix with eigenvalues $e^{i \theta_1}, \ldots, e^{i \theta_g}, e^{- i \theta_1}, \ldots, e^{i \theta_g}$, with $0 \leq \theta_j \leq \pi$ for $1 \leq j \leq g$. Then the global minimum of
	\[\varphi(U) = 2 \sum^{g}_{j = 1}{\frac{1}{|{\ZZ_U}'(\theta_j)|}}
\]
occurs precisely at the set of points
	\[\left(\widetilde{\theta}_{\sigma(1)}, \ldots, \widetilde{\theta}_{\sigma(g)}\right),
\]
where $\sigma$ is a permutation on $\{1, \ldots, g\}$, and
	\[\left(\widetilde{\theta}_1, \ldots, \widetilde{\theta}_g\right) = \left(\frac{\pi}{2g}, \frac{3 \pi}{2g}, \ldots, \frac{(2g - 1) \pi}{2g}\right).
\]
Furthermore,
	\[\varphi\left(\widetilde{\theta}_{\sigma(1)}, \ldots, \widetilde{\theta}_{\sigma(g)}\right) = 1.
\]
\end{proposition}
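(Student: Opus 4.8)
The plan is to prove the sharper statement that $\varphi(U)\ge 1$ for \emph{every} $U\in\USp(2g)$, with equality exactly at the claimed tuples, by finding and exploiting a single $U$-independent linear identity satisfied by the complex numbers $1/{\ZZ_U}'(\theta_j)$. First I would observe that a global minimiser cannot be degenerate: if some $\theta_j\in\{0,\pi\}$, or $\theta_j=\theta_k$ for $j\ne k$, then by \eqref{charpol} the polynomial $\ZZ_U$ has a zero of order $\ge 2$ at the corresponding point, so ${\ZZ_U}'$ vanishes there and $\varphi(U)=+\infty$. Hence I may and do assume that $\theta_1,\dots,\theta_g$ are distinct and lie in the open interval $(0,\pi)$.

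The key computation is to differentiate the product expansion $\ZZ_U(\theta)=2^g e^{ig\theta}\prod_{k=1}^g(\cos\theta-\cos\theta_k)$ provided by \eqref{charpol}. A short calculation gives ${\ZZ_U}'(\theta_j)=c_j e^{ig\theta_j}$, where $c_j=-2^g\sin\theta_j\prod_{k\ne j}(\cos\theta_j-\cos\theta_k)$ is a nonzero real number, whence
\[
\operatorname{Im}\frac{1}{{\ZZ_U}'(\theta_j)}=\frac{-\sin(g\theta_j)}{c_j}=\frac{\sin(g\theta_j)}{2^g\sin\theta_j\prod_{k\ne j}(\cos\theta_j-\cos\theta_k)}=\frac{f(x_j)}{2^g\,P'(x_j)},
\]
where $x_k=\cos\theta_k$, $P(x)=\prod_{k=1}^g(x-x_k)$, and $f$ is the polynomial of degree $g-1$ determined by $f(\cos\theta)=\sin(g\theta)/\sin\theta$, i.e.\ the Chebyshev polynomial of the second kind, which has leading coefficient $2^{g-1}$. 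Summing over $j$ and using the Lagrange interpolation identity $\sum_{j}f(x_j)/P'(x_j)=2^{g-1}$ (the right-hand side being the leading coefficient of $f$, since the $x_k$ are distinct and $\deg f=g-1$), one obtains the $U$-independent identity
\[
\sum_{j=1}^g\operatorname{Im}\frac{1}{{\ZZ_U}'(\theta_j)}=\frac12 .
\]
An alternative proof of this identity integrates the $2\pi$-periodic meromorphic function $1/\ZZ_U$ around a tall rectangle of width $2\pi$, using $\det U=1$ to control $\det(I-Ue^{-i\theta})$ as $\operatorname{Im}\theta\to\pm\infty$.

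Given this identity, the lower bound drops out:
\[
\varphi(U)=2\sum_{j=1}^g\left|\frac{1}{{\ZZ_U}'(\theta_j)}\right|\ \ge\ 2\sum_{j=1}^g\left|\operatorname{Im}\frac{1}{{\ZZ_U}'(\theta_j)}\right|\ \ge\ 2\left|\sum_{j=1}^g\operatorname{Im}\frac{1}{{\ZZ_U}'(\theta_j)}\right|=1 .
\]
For equality, $\varphi(U)=1$ forces equality in both steps; the first step then gives $\operatorname{Re}\bigl(1/{\ZZ_U}'(\theta_j)\bigr)=0$ for every $j$, and since ${\ZZ_U}'(\theta_j)=c_j e^{ig\theta_j}$ with $c_j$ real and nonzero, this is equivalent to $g\theta_j\equiv\tfrac{\pi}{2}\pmod{\pi}$, i.e.\ $\theta_j\in\{\tfrac{\pi}{2g},\tfrac{3\pi}{2g},\dots,\tfrac{(2g-1)\pi}{2g}\}=\{\widetilde\theta_1,\dots,\widetilde\theta_g\}$. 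As the $\theta_j$ are distinct and there are precisely $g$ admissible values, $(\theta_1,\dots,\theta_g)$ must be a permutation of $(\widetilde\theta_1,\dots,\widetilde\theta_g)$. Conversely, at such a tuple the $x_k=\cos\widetilde\theta_k$ are the zeros of the Chebyshev polynomial $T_g$, so $P(x)=2^{1-g}T_g(x)$ and, using $T_g'(\cos\theta)=g\sin(g\theta)/\sin\theta$, one computes $|{\ZZ_U}'(\widetilde\theta_j)|=2^g\sin\widetilde\theta_j\,|P'(x_j)|=2g$ for each $j$, hence $\varphi=2\sum_{j=1}^g(2g)^{-1}=1$. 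Thus $1$ is attained and is the global minimum, as claimed; the more general version promised in Section~\ref{unitarysection} should follow by running the same argument for the relevant larger group, with only the numerical constant in the displayed identity changing.

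The step I expect to be the main obstacle is discovering — and then proving — that $\sum_j\operatorname{Im}\bigl(1/{\ZZ_U}'(\theta_j)\bigr)$ takes the same value for all $U$. Once one suspects this, both the interpolation proof and the contour-integral proof are short; but without it, a direct attack via multivariable calculus on $\varphi$ over $[0,\pi]^g$ looks forbidding. A secondary technical point is to make sure the degenerate eigenangle configurations are genuinely excluded as minimisers, rather than merely being points where the closed form for $\varphi$ is awkward to manipulate.
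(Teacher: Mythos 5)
Your proof is correct, and it takes a genuinely different route from the paper's. (One small clerical point first: equation \eqref{charpol} in the paper has a sign typo — factoring $(1-e^{i(\theta_j-\theta)})(1-e^{-i(\theta_j+\theta)})$ actually gives $2e^{-i\theta}(\cos\theta-\cos\theta_j)$, so ${\ZZ_U}'(\theta_j)=c_j e^{-ig\theta_j}$ and your identity comes out as $\sum_j\operatorname{Im}\bigl(1/{\ZZ_U}'(\theta_j)\bigr)=-\tfrac12$ rather than $+\tfrac12$. This is irrelevant after taking absolute values, so your argument is unaffected.)

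The paper's proof of the more general Proposition~\ref{unitarythm} first verifies $\varphi(\widetilde{\theta})=1$ by direct computation with roots of unity, and then proves $\varphi\ge 1$ by applying the arithmetic mean--geometric mean inequality to the sum in \eqref{varphiunitarydiff} (turning a sum of products into a single symmetric product), followed by Jensen's inequality for the convex function $\log\cosec$ and the telescoping identity $\sum_k\omega_{jk}=j\pi$; strict convexity then isolates the equality case. Your argument replaces all of that with a single structural observation: $\sum_j\operatorname{Im}\bigl(1/{\ZZ_U}'(\theta_j)\bigr)$ is a $U$-independent constant, which you derive either by Lagrange interpolation of the Chebyshev polynomial $U_{g-1}$ against the nodes $\cos\theta_j$ or by a residue computation for $1/\ZZ_U$ over a period rectangle. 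The bound $\varphi\ge 1$ is then just the triangle inequality, and equality forces $\operatorname{Re}\bigl(1/{\ZZ_U}'(\theta_j)\bigr)=0$, i.e.\ $g\theta_j\equiv\pi/2\pmod{\pi}$, which pins the configuration exactly as claimed. Both approaches are correct and of comparable length, but they buy different things. The paper's convexity method extends without change to $\sum_j\bigl|{\ZZ_U}'(\theta_j)\bigr|^{2k}$ for all $k<0$, which is the $J_k$-moment generalisation discussed at the end of Section~\ref{unitarysection}; your linear identity is specific to the exponent $2k=-1$ (where the sum of inverses of the derivatives can be interpreted as a sum of residues). On the other hand, your approach surfaces the clean hidden identity $\sum_j 1/{\ZZ_U}'(\theta_j)=-i$ on all of $\U(N)$, which is invisible in the paper's treatment. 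Your closing remark that the $\U(N)$ case follows ``with only the numerical constant changing'' is correct in substance — the residue argument gives $\sum_j 1/{\ZZ_U}'(\theta_j)=-i$ for every $U\in\U(N)$, since $\det U$ drops out in the limit, and then $\varphi\ge\bigl|\sum_j 1/{\ZZ_U}'(\theta_j)\bigr|=1$ without even splitting into real and imaginary parts — but the equality analysis in the unitary case, where the extra rotational parameter $\phi$ appears, is a touch less rigid than in the symplectic case and deserves one more line of justification.
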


\begin{proof}[Proof of Theorem \ref{mainthm}]
As Proposition \ref{unitarysympthm} implies that the set
	\[\left\{\left(\theta_1, \ldots, \theta_g\right) \in [0,\pi]^g : \varphi\left(\theta_1, \ldots, \theta_g\right) \leq 1\right\}
\]
is finite, we must have that
	\[\Prob_{\USp(2g)}\left(\varphi(U) \leq \beta\right) = 0
\]
for all $0 < \beta \leq 1$ via the Weyl integration formula and the fact that the measure $\mu_{\USp}$ is atomless, with $\mu_{\USp}$ as in \eqref{muUSp}. To prove that
	\[0 < \Prob_{\USp(2g)}\left(\varphi(U) \leq \beta\right) < 1
\]
for $\beta > 1$, we note that from Proposition \ref{unitarysympthm}, the equality $\varphi\left(\theta_1, \ldots, \theta_g\right) = 1$ is attained at the point
	\[\left(\widetilde{\theta}_1, \ldots, \widetilde{\theta}_g\right) = \left(\frac{\pi}{2g}, \frac{3 \pi}{2g}, \ldots, \frac{(2g - 1) \pi}{2g}\right),
\]
which lies in the region $0 < \theta_1 < \ldots < \theta_g < \pi$. As $\varphi$ is real analytic and non-uniformly constant in this region, there must exist an open neighbourhood of $\left(\widetilde{\theta}_1, \ldots, \widetilde{\theta}_g\right)$ in this region where $1 \leq \varphi\left(\theta_1, \ldots, \theta_g\right) \leq \beta$. This open neighbourhood must have positive $\mu_{\USp}$-measure, as $d\mu_{\USp}\left(\theta_1, \ldots, \theta_g\right)$ does not vanish on open subsets of $[0,\pi]^g$. Consequently,
	\[\Prob_{\USp(2g)}\left(\varphi(U) \leq \beta\right) > 0.
\]
On the other hand, we must also have that
	\[\Prob_{\USp(2g)}\left(\varphi(U) \leq \beta\right) < 1,
\]
as $\varphi$ blows up when $\theta_j = \theta_k$ for any $j \neq k$, and so for any such point there exists some open neighbourhood with $\varphi\left(\theta_1, \ldots, \theta_g\right) > \beta$ within this neighbourhood.
\end{proof}

It is worth noting that for $\beta > 1$, the quantity
	\[\Prob_{\USp(2g)}\left(\varphi(U) \leq \beta\right)
\]
is in fact strictly increasing as a function of $\beta$; this follows from the argument above together with the fact that there exists a point $\left(\theta_1, \ldots, \theta_g\right)$ for which $\varphi\left(\theta_1, \ldots, \theta_g\right) = \beta$, which follows from the mean value theorem.

\section{Generalisations}

The definition of the summatory function of the M\"{o}bius function of $C/\F_q$ studied in this article involves a sum over all effective divisors $N$ of $C$ with $0 \leq \deg(N) \leq X - 1$, following the same definition as used previously in the literature in \cite{Cha} and \cite{Humphries} (though note that Cha in \cite{Cha} defines this sum to be over all $N$ with $0 \leq \deg(N) \leq X$, and then normalises $B(C/\F_q)$ by a factor of $1/\sqrt{q}$ to compensate for this alteration). On the other hand, it is more common to define the summatory functions of arithmetic functions of function fields as only involving a sum over effective divisors $N$ of a fixed degree, $\deg(N) = X$; for example, see \cite[Chapter 17]{Rosen}. Here we shall observe that this distinction is moot: we will show that Theorem \ref{mainthm} remains valid after replacing the $\beta$-Mertens conjecture
	\[\limsup_{X \to \infty} \frac{\left|M_{C/\F_q}(X)\right|}{q^{X / 2}} = \limsup_{X \to \infty} \frac{1}{q^{X / 2}} \left|\sum_{0 \leq \deg(N) \leq X - 1}{\mu_{C/\F_q}(N)}\right| \leq \beta
\]
by the ``localised'' $\beta$-Mertens conjecture
	\[\limsup_{X \to \infty} \frac{1}{q^{X / 2}} \left|\sum_{\deg(N) = X - 1}{\mu_{C/\F_q}(N)}\right| \leq \beta.
\]
That is, we need not average over all effective divisors of degree at most $X - 1$, but merely study the growth of the M\"{o}bius function of the effective divisors of degree $X - 1$. To prove this, we only require the following analogue of Lemma \ref{muBqasymp}, for then the associated analogue of Proposition \ref{Mertensaverage} holds with only trivial modifications to the proof.

\begin{lemma}[cf.\ Lemma \ref{muBqasymp}]
Suppose that $C$ satisfies \textup{LI}. Then
\begin{align*}
\limsup_{X \to \infty} \frac{1}{q^{X / 2}} \left|\sum_{\deg(N) = X - 1}{\mu_{C/\F_q}(N)}\right| & \geq \left(1 - \frac{1}{\sqrt{q}}\right)^2 \varphi\left(\vartheta\left(C/\F_q\right)\right),	\\
\limsup_{X \to \infty} \frac{1}{q^{X / 2}} \left|\sum_{\deg(N) = X - 1}{\mu_{C/\F_q}(N)}\right| & \leq \left(1 + \frac{1}{\sqrt{q}}\right)^2 \varphi\left(\vartheta\left(C/\F_q\right)\right),
\end{align*}
\end{lemma}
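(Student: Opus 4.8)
The plan is to mirror Cha's derivation of the exact formula \eqref{BLI} of Theorem \ref{muLIboundsthm}, working with the generating series of the degree-$(X - 1)$ M\"{o}bius sums rather than that of $M_{C/\F_q}(X)$, and then to compare the resulting quantity with $\varphi\left(\vartheta\left(C/\F_q\right)\right)$ by means of Theorem \ref{muLIboundsthm} and Lemma \ref{muBqasymp}. Write $m_{C/\F_q}(k) = \sum_{\deg(N) = k}{\mu_{C/\F_q}(N)}$. Expanding the Euler product $Z_{C/\F_q}(u) = \prod_P{\left(1 - u^{\deg(P)}\right)^{-1}}$ over the prime divisors $P$ of $C$ gives the identity of power series
\[
\frac{1}{Z_{C/\F_q}(u)} = \prod_P{\left(1 - u^{\deg(P)}\right)} = \sum_{k \geq 0}{m_{C/\F_q}(k)\, u^k},
\]
convergent for $|u| < q^{-1}$, and by \eqref{zetarational} this rational function equals $(1 - u)(1 - qu)/P_{C/\F_q}(u)$. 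Since $C$ satisfies LI the $2g$ inverse zeroes $\gamma_1^{-1}, \overline{\gamma_1}^{-1}, \ldots, \gamma_g^{-1}, \overline{\gamma_g}^{-1}$ of $Z_{C/\F_q}$ are distinct, and as $\left|\gamma_j\right| = \sqrt{q} \neq 1$ none of them equals $1$ or $q^{-1}$; hence $(1 - u)(1 - qu)/P_{C/\F_q}(u)$ has only simple poles, all distinct from its two zeroes, and a partial fraction expansion is in order.

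Computing the residues from $P_{C/\F_q}'(\gamma_j^{-1}) = Z_{C/\F_q}'(\gamma_j^{-1})(1 - \gamma_j^{-1})(1 - q \gamma_j^{-1})$ --- valid since $P_{C/\F_q}(\gamma_j^{-1}) = 0$ --- one obtains, for all $X \geq 2$,
\[
\frac{m_{C/\F_q}(X - 1)}{q^{X/2}} = -2 \sum_{j = 1}^{g}{\mathrm{Re}\!\left(\frac{1}{Z_{C/\F_q}'(\gamma_j^{-1})}\, e^{i X \theta(\gamma_j)}\right)},
\]
after pairing each pole $\gamma_j^{-1}$ with $\overline{\gamma_j}^{-1}$ and using that $P_{C/\F_q}$, hence $Z_{C/\F_q}$, has real coefficients (any polynomial part of the expansion, which arises only when $g = 1$, affects only $m_{C/\F_q}(0)$ and is immaterial as $X \to \infty$). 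Now LI is precisely the statement that $\pi, \theta(\gamma_1), \ldots, \theta(\gamma_g)$ are linearly independent over $\mathbb{Q}$, so by the Kronecker--Weyl equidistribution theorem the orbit $\{(X\theta(\gamma_1), \ldots, X\theta(\gamma_g)) \bmod 2\pi : X \in \mathbb{Z}_{>0}\}$ is dense in $(\mathbb{R}/2\pi\mathbb{Z})^g$; as each $Z_{C/\F_q}'(\gamma_j^{-1}) \neq 0$ (the zeroes being simple under LI), taking the limit superior of the absolute value of this almost periodic function of $X$ yields
\[
\limsup_{X \to \infty} \frac{1}{q^{X/2}} \left|\sum_{\deg(N) = X - 1}{\mu_{C/\F_q}(N)}\right| = 2 \sum_{j = 1}^{g}{\frac{1}{\left|Z_{C/\F_q}'(\gamma_j^{-1})\right|}}.
\]
This is exactly Cha's argument for Theorem \ref{muLIboundsthm}, the sole difference being the absence of the factor $\gamma_j/(\gamma_j - 1)$.

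It remains to compare the right-hand side with $\varphi\left(\vartheta\left(C/\F_q\right)\right)$. Writing $\gamma_j = \sqrt{q}\, e^{i \theta(\gamma_j)}$ gives $\left|\gamma_j - 1\right|^2 = q + 1 - 2\sqrt{q}\cos\theta(\gamma_j) \in \left[(\sqrt{q} - 1)^2, (\sqrt{q} + 1)^2\right]$, hence $1 - q^{-1/2} \leq \left|\gamma_j - 1\right|/\sqrt{q} \leq 1 + q^{-1/2}$; combined with the identity
\[
\frac{1}{\left|Z_{C/\F_q}'(\gamma_j^{-1})\right|} = \left|\frac{1}{Z_{C/\F_q}'(\gamma_j^{-1})}\frac{\gamma_j}{\gamma_j - 1}\right| \cdot \frac{\left|\gamma_j - 1\right|}{\sqrt{q}}
\]
and the exact formula \eqref{BLI} of Theorem \ref{muLIboundsthm}, summing over $j$ shows that $\limsup_{X \to \infty} q^{-X/2}\left|\sum_{\deg(N) = X - 1}{\mu_{C/\F_q}(N)}\right|$ lies between $\left(1 - q^{-1/2}\right) B\left(C/\F_q\right)$ and $\left(1 + q^{-1/2}\right) B\left(C/\F_q\right)$. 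Inserting the bounds $\left(1 - q^{-1/2}\right)\varphi\left(\vartheta\left(C/\F_q\right)\right) \leq B\left(C/\F_q\right) \leq \left(1 + q^{-1/2}\right)\varphi\left(\vartheta\left(C/\F_q\right)\right)$ of Lemma \ref{muBqasymp} produces the claimed inequalities with the factors $\left(1 \pm q^{-1/2}\right)^2$; the associated analogue of Proposition \ref{Mertensaverage} then follows by running its proof with this lemma in place of Lemma \ref{muBqasymp}.

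The step I expect to be the main obstacle, although it is routine, is the partial fraction analysis of the second paragraph: one must check carefully that LI forces the poles of $(1 - u)(1 - qu)/P_{C/\F_q}(u)$ to be simple and to avoid $u = 1$ and $u = q^{-1}$, so that the displayed closed form for $m_{C/\F_q}(X - 1)$ is valid, and that LI is exactly the hypothesis making the Kronecker--Weyl step collapse the limit superior onto the sum of moduli. Everything else is bookkeeping already carried out in \cite{Cha}.
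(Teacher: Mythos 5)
Your proof is correct, and the first two-thirds of it — the partial-fraction expansion of $1/Z_{C/\F_q}(u) = (1-u)(1-qu)/P_{C/\F_q}(u)$ giving the exact closed form for $m_{C/\F_q}(X-1)$, followed by the Kronecker--Weyl collapse of the limsup to $2\sum_j |Z_{C/\F_q}'(\gamma_j^{-1})|^{-1}$ — is the same as the paper's, except that the paper simply cites Cha's Proposition 2.2 for the residue computation while you carry it out in full. Where you diverge is the final comparison with $\varphi(\vartheta(C/\F_q))$. The paper differentiates the relation $\ZZ_{\vartheta(C/\F_q)}(\theta) = P_{C/\F_q}(e^{-i\theta}/\sqrt{q})$ directly to obtain
\[
\frac{1}{Z_{C/\F_q}'(\gamma_j^{-1})} = i\,\frac{q + 1 - 2\sqrt{q}\cos\theta(\gamma_j)}{q\,e^{2i\theta(\gamma_j)}}\,\frac{1}{\ZZ_{\vartheta(C/\F_q)}'(\theta(\gamma_j))},
\]
and then bounds the Jacobian factor $|q + 1 - 2\sqrt{q}\cos\theta(\gamma_j)|/q = |\gamma_j - 1|^2/q$ between $(1 - q^{-1/2})^2$ and $(1 + q^{-1/2})^2$ in a single step. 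You instead factor out only $|\gamma_j - 1|/\sqrt{q}$, use the exact identity \eqref{BLI} of Theorem \ref{muLIboundsthm} to recognise the remaining sum as $B(C/\F_q)$, and then invoke Lemma \ref{muBqasymp} to compare $B(C/\F_q)$ with $\varphi(\vartheta(C/\F_q))$; each stage loses one factor of $(1 \pm q^{-1/2})$, so the products agree. Your route reuses two stated black boxes and so is shorter to write given the surrounding machinery, but it is less self-contained: the paper's one-step Jacobian computation makes the origin of the exponent $2$ (as against the $1$ in Lemma \ref{muBqasymp}) immediately transparent, and it does not require the proof of Theorem \ref{muLIboundsthm} as input. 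Both arguments are sound and yield the stated inequalities.
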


\begin{proof}
From the method of proof of \cite[Proposition 2.2]{Cha}, if $Z_{C/\F_q}(u)$ has only simple zeroes, then as $X$ tends to infinity,
	\[\sum_{\deg(N) = X - 1}{\mu_{C/\F_q}(N)} = - 2 \Re\left(\sum^{g}_{j = 1}{\frac{\gamma_j^X}{{Z_{C/\F_q}}'\left(\gamma_j^{-1}\right)}}\right) + O(1),
\]
and so if $C$ satisfies LI, the Kronecker--Weyl theorem and the fact that $\gamma_j^X = q^{X / 2} e^{i X \theta\left(\gamma_j\right)}$ imply that
	\[\limsup_{X \to \infty} \frac{1}{q^{X / 2}} \left|\sum_{\deg(N) = X - 1}{\mu_{C/\F_q}(N)}\right| = 2 \sum^{g}_{j = 1}{\frac{1}{\left|{Z_{C/\F_q}}'\left(\gamma_j^{-1}\right)\right|}}.
\]
By \eqref{zetarational}, \eqref{charFrob}, and the fact that $\gamma_j = \sqrt{q} e^{i \theta\left(\gamma_j\right)}$, we have that
	\[\frac{1}{{Z_{C/\F_q}}'\left(\gamma_j^{-1}\right)} = i \frac{q + 1 - 2 \sqrt{q} \cos \theta\left(\gamma_j\right)}{q e^{2 i \theta\left(\gamma_j\right)}} \frac{1}{{\ZZ_{\vartheta\left(C/\F_q\right)}}'\left(\theta\left(\gamma_j\right)\right)}.
\]
As
	\[\left(\sqrt{q} - 1\right)^2 \leq \left|q + 1 - 2 \sqrt{q} \cos \theta\left(\gamma_j\right)\right| \leq \left(\sqrt{q} + 1\right)^2,
\]
we can take absolute values and then sum from $j = 1$ to $j = g$, yielding the result.
\end{proof}

We also note that we can prove a weaker form of Theorem \ref{mainthm} for families of curves other than hyperelliptic curves. Indeed, let $\FF = \{\FF_q\}$ be a family of curves indexed by a set of prime powers $q$ tending to infinity, such that each $\FF_q$ consists of a finite set of nonsingular projective curves over $\F_q$; note that we do not require that the sequence $q$ consist only of powers of a single fixed prime, and also that there is no restriction whatsoever on the genus of a curve in each $\FF_q$. For a property $D$ of a curve $C \in \FF_q$, we define the probability
	\[\Prob_{\FF_q}\left(\textup{$C$ satisfies $D$}\right) = \frac{\#\left\{C \in \FF_q : \textup{$C$ satisfies $D$}\right\}}{\#\FF_q}.
\]

\begin{theorem}\label{extrathm}
Suppose that
	\[\lim_{q \to \infty} \Prob_{\FF_q}\left(\textup{$C$ satisfies LI}\right) = 1.
\]
Then
\begin{equation}\label{FFq<1}
\lim_{q \to \infty} \Prob_{\FF_q}\left(B(C/\F_q) < 1\right) = 0. 
\end{equation}
Furthermore, there exists a family $\FF$ for which $C$ satisfies \textup{LI} for all $C \in \FF$, but with
\begin{equation}\label{FFq=1}
\lim_{q \to \infty} \Prob_{\FF_q}\left(B(C/\F_q) = 1\right) = 1, 
\end{equation}
and similarly for every fixed $b > a > 1$ there exists a family $\FF$ for which $C$ satisfies \textup{LI} for all $C \in \FF$, but with
\begin{equation}\label{FFqab}
\lim_{q \to \infty} \Prob_{\FF_q}\left(a < B(C/\F_q) < b\right) = 1.
\end{equation}
\end{theorem}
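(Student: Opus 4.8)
The plan is to treat the three assertions separately, reducing each to explicit formulas already at hand: Lemma \ref{muBqasymp}, Cha's evaluation (Theorem \ref{muLIboundsthm}) and its genus-one specialisation, Proposition \ref{unitarysympthm}, and Waterhouse's classification of Frobenius traces of elliptic curves. For \eqref{FFq<1}, first split according to LI: $\Prob_{\FF_q}(B(C/\F_q) < 1) \le \Prob_{\FF_q}(C \notin \textup{LI}) + \Prob_{\FF_q}(C \in \textup{LI},\ B(C/\F_q) < 1)$, the first term vanishing in the limit by hypothesis. It then suffices to prove $B(C/\F_q) \ge 1$ for \emph{every} curve $C$ satisfying LI over $\F_q$, for all $q$ beyond a bound independent of the genus, so that the second set is empty for all large $q$. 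I would rewrite Theorem \ref{muLIboundsthm} via \eqref{charFrob} as $B(C/\F_q) = \varphi(\vartheta(C/\F_q))\,\overline{w}$, where $\overline{w}$ is the mean of $w_j = |1 - \gamma_j^{-1}| = (1 + q^{-1} - 2q^{-1/2}\cos\theta_j)^{1/2}$ weighted by $|{\ZZ_{\vartheta(C/\F_q)}}'(\theta_j)|^{-1}$. Since $\varphi \ge 1$ everywhere (Proposition \ref{unitarysympthm}) and $\overline{w} \ge 1 - q^{-1/2}$, the bound $B(C/\F_q) \ge 1$ is immediate whenever $\varphi(\vartheta(C/\F_q)) \ge (1 - q^{-1/2})^{-1}$. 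In the complementary range $1 \le \varphi(\vartheta(C/\F_q)) < (1 - q^{-1/2})^{-1}$, Proposition \ref{unitarysympthm} --- whose extremal configurations $(\widetilde\theta_{\sigma(1)}, \ldots, \widetilde\theta_{\sigma(g)})$ are the only minimisers of $\varphi$, and are genuinely excluded for LI curves since their eigenangles are rational multiples of $\pi$ --- confines $\vartheta(C/\F_q)$ to a small neighbourhood of one such configuration, where the weights are all equal and $\sum_j \cos\theta_j = 0$; a second-order expansion of $\varphi\,\overline{w}$ about that configuration should then yield $\varphi\,\overline{w} \ge 1$, with an excess of order $q^{-1}$, as the genus-one computation below already illustrates for traces near the extremal one. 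This gives \eqref{FFq<1}.

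For \eqref{FFq=1} and \eqref{FFqab}, specialise Theorem \ref{muLIboundsthm} to $g = 1$: a direct computation with \eqref{zetarational} and \eqref{charFrob} shows that a curve $E/\F_q$ satisfying LI has $B(E/\F_q) = 2\sqrt{q + 1 - a_E}\big/\sqrt{4q - a_E^{2}}$, where $a_E$ is the Frobenius trace, so that $B(E/\F_q) \le 1 \iff (a_E - 2)^2 \le 0 \iff a_E = 2$, whereupon $B(E/\F_q) = 1$. Let $q$ run over the odd primes. For \eqref{FFq=1}, take $\FF_q = \{E_q\}$ with $E_q/\F_q$ of trace $2$; such a curve exists by Waterhouse's theorem since $2$ is coprime to $q$ and $|2| \le 2\sqrt q$, and it satisfies LI because for $g = 1$ this merely asserts that $\theta(\gamma_1) = \arccos(1/\sqrt q)$ is not a rational multiple of $\pi$, whereas otherwise $2\cos\theta(\gamma_1) = 2/\sqrt q$ would be an algebraic integer, which it is not for $q$ an odd prime. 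Then $B(C/\F_q) = 1$ for every $C \in \FF_q$. For \eqref{FFqab}, fix $c$ in the nonempty subinterval $\bigl(\sqrt{1 - a^{-2}},\, \sqrt{1 - b^{-2}}\bigr)$ of $(0,1)$, so $1/\sqrt{1 - c^2} \in (a,b)$, and for each large odd prime $q$ choose an integer $a_q$ with $|a_q - 2c\sqrt q| \le 1$ and $\gcd(a_q, q) = 1$ --- possible since two consecutive integers are not both divisible by $q$ --- and put $\FF_q = \{E_q\}$ with $E_q/\F_q$ of trace $a_q$. Again $E_q$ satisfies LI (now $2\cos\theta(\gamma_1) = a_q/\sqrt q$ fails to be an algebraic integer because $\gcd(a_q, q) = 1$), and since $a_q/2\sqrt q \to c$ the explicit formula gives $B(E_q/\F_q) \to 1/\sqrt{1 - c^2} \in (a,b)$, so $a < B(E_q/\F_q) < b$ once $q$ is large enough; restricting the family to such $q$ finishes the proof.

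The hard part is the near-extremal estimate in \eqref{FFq<1}: one must show $B(C/\F_q) \ge 1$ for LI curves whose eigenangle configuration lies within $O(q^{-1/4})$ of a minimiser of $\varphi$, and uniformly in the genus, which is unbounded across $\FF$. Concretely this means establishing a genus-uniform second-order estimate for $\varphi\,\overline{w}$ near those minimisers, in which the naive $O(q^{-1/2})$ contributions must be shown to cancel so that the surviving term is a positive $O(q^{-1})$; everything else is a direct consequence of the cited results or a routine computation with the genus-one formula.
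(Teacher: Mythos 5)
Your constructions for \eqref{FFq=1} and \eqref{FFqab} are essentially the paper's: in both cases one takes $\FF_q$ to consist of elliptic curves with prescribed Frobenius trace, appeals to Waterhouse for existence, uses the genus-one formula $B(E/\F_q)=2\sqrt{(q+1-a_E)/(4q-a_E^2)}$ (stated in the paper as coming from \cite{Humphries}), and verifies LI by the algebraic-integer argument you give. The only cosmetic difference is that the paper writes down the explicit interval of admissible traces $a_E$ giving $a<B(E/\F_q)<b$, whereas you fix a single $c\in\bigl(\sqrt{1-a^{-2}},\sqrt{1-b^{-2}}\bigr)$ and take $a_q$ near $2c\sqrt q$; both work.

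For \eqref{FFq<1}, however, you have set yourself a strictly harder problem than the one the theorem requires, and you have (as you acknowledge) not solved it. You attempt to prove the pointwise inequality $B(C/\F_q)\ge 1$ for \emph{every} curve satisfying LI, which would require a genus-uniform second-order expansion of $\varphi\cdot\overline w$ in the $O(q^{-1/4})$-neighbourhood of the minimisers of $\varphi$; you sketch neither the expansion nor the cancellation of the $O(q^{-1/2})$ terms, nor the uniformity in $g$ (which is essential since $\FF$ carries no genus restriction). That claim may well be true, but proving it is not needed, and the paper does not attempt it. The paper instead re-runs the $\e$--$\delta$ decomposition from the proof of Proposition \ref{Mertensaverage} with $\beta=1-\delta$: since $\varphi(U)\ge 1$ for all $U\in\USp(2g)$ and all $g$ by Proposition \ref{unitarysympthm}, the sets $A_3,\,A_{3'},\,A_{5'}$ (those demanding $\varphi\le 1-\delta+\e<1$) are \emph{empty}, $A_4,\,A_{4'}$ are empty for $q$ large depending only on $\delta,\e$ by Lemma \ref{muBqasymp}, and the non-LI contributions $A_2,\,A_{2'}$ vanish by hypothesis; hence $\Prob_{\FF_q}(B(C/\F_q)\le 1-\delta)\to 0$ for every $\delta\in(0,1)$, from which the paper deduces \eqref{FFq<1}. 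No near-extremal asymptotics are required, because one never needs to rule out LI curves with $B(C/\F_q)\in[1-q^{-1/2},1)$ individually; one only needs that $\varphi$ cannot drop below $1$. You should replace the unproved second-order analysis with this soft decomposition argument, which is both simpler and complete.
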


\begin{proof}
For \eqref{FFq<1}, we copy the proof of Proposition \ref{Mertensaverage} with $\FF_q$ replacing $\HH_{2g + 1, q^n}$, taking the limit as $q$ tends to infinity as opposed to $n$ tending to infinity, and choosing $\beta = 1 - \delta$ for fixed $0 < \delta < 1$, so that $0 < \e < 1 - \delta$. We treat the sets $A_2, A_{2'}, A_4, A_{4'}$ using the same method as in Proposition \ref{Mertensaverage}, obtaining
	\[\lim_{q \to \infty} \frac{\# A_2}{\# \FF_q} = \lim_{q \to \infty} \frac{\# A_{2'}}{\# \FF_q} = \lim_{q \to \infty} \frac{\# A_4}{\# \FF_q} = \lim_{q \to \infty} \frac{\# A_{4'}}{\# \FF_q} = 0.
\]
For $A_3$ and $A_3'$, we have that
	\[A_3 \sqcup A_{3'} \subset \left\{C \in \FF_q : 1 - \delta - \e \leq \varphi\left(\vartheta\left(C/\F_q\right)\right) \leq 1 - \delta + \e\right\},
\]
and as $1 - \delta + \e < 1$, Proposition \ref{unitarysympthm} implies that this set is empty, so that
	\[\lim_{q \to \infty} \frac{\# A_3}{\# \FF_q} = \lim_{q \to \infty} \frac{\# A_{3'}}{\# \FF_q} = 0.
\]
Finally, for $A_{1'}$ we write
	\[A_{1'} = A_{2'} \sqcup A_{5'}
\]
with
	\[A_{5'} = \left\{C \in \FF_q : \varphi\left(\vartheta\left(C/\F_q\right)\right) \leq 1 - \delta, \; C \in \textup{LI}\right\}.
\]
Again,
	\[\lim_{q \to \infty} \frac{\# A_{2'}}{\# \FF_q} = 0,
\]
and Proposition \ref{unitarysympthm} shows that $A_{5'}$ is empty, so that
	\[\lim_{q \to \infty} \frac{\# A_{5'}}{\# \FF_q} = 0
\]
as well. Thus
	\[\Prob_{\FF_q}\left(B(C/\F_q) \leq 1 - \delta\right) = \lim_{q \to \infty} \frac{\# A_1}{\# \FF_q} = 0,
\]
and as $\delta > 0$ was arbitrary, we obtain \eqref{FFq<1}.

For \eqref{FFq=1}, we take $q$ to be any odd prime power and $\FF_q$ to consist solely of the elliptic curve $E$ over $\F_q$ whose trace of the Frobenius is equal to $2$; by the proof of \cite[Theorem 4.1]{Waterhouse}, such an $E$ exists and satisfies LI, and by Theorem \ref{mainellcurvethm}, $B(E/\F_q) = 1$.

Finally, for \eqref{FFqab}, we take $q = p^m$ for some prime $p$ and we take $\FF_q$ to consist of the set of elliptic curves $E$ over $\F_q$ whose trace $a_E$ of the Frobenius is an integer satisfying $a_E \not\equiv 0 \pmod{p}$, $|a_E| < 2\sqrt{q}$, and
	\[\frac{2}{b^2} \left(1 - \sqrt{(qb^2 - 1)(b^2 - 1)}\right) < a_E < \frac{2}{a^2} \left(1 - \sqrt{(qa^2 - 1)(a^2 - 1)}\right).
\]
Note that for all sufficiently large $q$ there exists such an integer $a_E$ for which this inequality holds, and hence by the proof of \cite[Theorem 4.1]{Waterhouse} there exists an elliptic curve $E$ over $\F_q$ satisfying LI with trace of the Frobenius equal to $a_E$. It follows that
	\[a < 2\sqrt{\frac{q + 1 - a_E}{4q - a_E^2}} < b
\]
and as it is shown in \cite[\S 3]{Humphries} that
	\[B(E/\F_q) = 2\sqrt{\frac{q + 1 - a_E}{4q - a_E^2}},
\]
the result follows.
\end{proof}

\section{The Minimum of $\varphi(U)$}
\label{unitarysection}

Let $\U(N)$ denote the space of $N \times N$ unitary matrices, so that a matrix $U \in \U(N)$ has eigenvalues $e^{i \theta_1}, \ldots, e^{i \theta_N}$ with $-\pi \leq \theta_j \leq \pi$ for all $1 \leq j \leq N$. For real $\theta$, the characteristic polynomial $Z_U(\theta)$ of $U$ is defined to be
	\[\ZZ_U(\theta) = \det\left(I - U e^{-i \theta}\right) = \prod^{N}_{j = 1}{\left(1 - e^{i\left(\theta_j - \theta\right)}\right)}.
\]
Let
\begin{equation}\label{varphiunitary}
\varphi(U) = \sum^{N}_{j = 1}{\frac{1}{\left|{\ZZ_U}'\left(\theta_j\right)\right|}},
\end{equation}
so that
\begin{equation}\label{varphiunitarydiff}
\begin{split}
\varphi(U) = \varphi\left(\theta_1, \ldots, \theta_N\right) & = \sum^{N}_{j = 1}{\prod^{N}_{\substack{k = 1 \\ k \neq j}}{\frac{1}{\left|1 - e^{i \left(\theta_k - \theta_j\right)}\right|}}}	\\
&  = \frac{1}{2^{N - 1}} \sum^{N}_{j = 1}{\prod^{N}_{\substack{k = 1 \\ k \neq j}}{\left|\cosec\left(\frac{\theta_k - \theta_j}{2}\right)\right|}}.
\end{split}
\end{equation}
We prove the following generalisation of Proposition \ref{unitarysympthm}.

\begin{proposition}\label{unitarythm}
Let $U \in \U(N)$ be a unitary matrix with eigenvalues $e^{i \theta_1}, \ldots, e^{i \theta_N}$, and let $\varphi(U) = \varphi\left(\theta_1, \ldots, \theta_N\right)$ be as in \eqref{varphiunitary}. Then the global minimum of $\varphi$ occurs precisely at the set of points
	\[\left(\widetilde{\theta}_{\sigma(1)} + \phi, \ldots, \widetilde{\theta}_{\sigma(N)} + \phi\right),
\]
where $\sigma \in S_N$, $\phi$ is a one-dimensional translation modulo $2\pi$, and
	\[\left(\widetilde{\theta}_1, \ldots, \widetilde{\theta}_N\right) = \left(- \frac{(N - 1)\pi}{N}, - \frac{(N - 3) \pi}{N}, \ldots, \frac{(N - 1)\pi}{N}\right).
\]
Furthermore,
\begin{equation}\label{unitaryminimum}
\varphi\left(\widetilde{\theta}_{\sigma(1)} + \phi, \ldots, \widetilde{\theta}_{\sigma(N)} + \phi\right) = 1.
\end{equation}
\end{proposition}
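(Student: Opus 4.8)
The plan is to rewrite $\varphi(U)$ in terms of the ordinary monic characteristic polynomial of $U$ and then extract the bound from a Lagrange interpolation identity. Set $z_j = e^{i\theta_j}$ and let $p(z) = \prod_{j=1}^N(z - z_j)$, a monic polynomial of degree $N$. Since $\varphi(U) = +\infty$ whenever $U$ has a repeated eigenvalue, I may assume $z_1,\ldots,z_N$ are distinct throughout. First I would note that the substitution $z = e^{i\theta}$ together with the chain rule $\frac{d}{d\theta} = iz\frac{d}{dz}$ gives ${\ZZ_U}'(\theta_j) = i z_j^{1 - N} p'(z_j)$, so that, as $|z_j| = 1$,
\[
\varphi(U) = \sum_{j=1}^N \frac{1}{\left|{\ZZ_U}'(\theta_j)\right|} = \sum_{j=1}^N \frac{1}{|p'(z_j)|}.
\]

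Next I would establish the identity $\sum_{j=1}^N z_j^{N-1}/p'(z_j) = 1$. This is immediate from Lagrange interpolation: the degree $\le N-1$ interpolant of $z \mapsto z^{N-1}$ at the nodes $z_1,\ldots,z_N$ is $z^{N-1}$ itself, and comparing the coefficients of $z^{N-1}$ on both sides of $z^{N-1} = \sum_j z_j^{N-1}\frac{p(z)}{(z-z_j)p'(z_j)}$ gives the claim; alternatively it is the residue calculation $1 = \frac{1}{2\pi i}\oint_{|z|=R}\frac{z^{N-1}}{p(z)}\,dz$ for large $R$. Because $|z_j| = 1$, the triangle inequality then gives
\[
1 = \left|\sum_{j=1}^N \frac{z_j^{N-1}}{p'(z_j)}\right| \le \sum_{j=1}^N \frac{1}{|p'(z_j)|} = \varphi(U),
\]
so $\varphi(U) \ge 1$ always. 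The value $1$ is attained: when the $z_j$ are the $N$-th roots of a unimodular constant $w$, one has $p(z) = z^N - w$, hence $|p'(z_j)| = |N z_j^{N-1}| = N$ and $\varphi(U) = \sum_{j=1}^N \frac1N = 1$. This already pins down the minimal value and shows every configuration $(\widetilde\theta_{\sigma(1)} + \phi,\ldots,\widetilde\theta_{\sigma(N)} + \phi)$ is a minimiser.

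The remaining step — the converse, that these are the \emph{only} minimisers — I expect to be the main obstacle. Equality in the triangle inequality above forces every term $z_j^{N-1}/p'(z_j)$ to be a nonnegative real, and (being nonzero) a positive real. To turn this into a condition on the $\theta_j$, I would use $z_j - z_k = 2i\,e^{i(\theta_j+\theta_k)/2}\sin\!\big(\tfrac{\theta_j-\theta_k}{2}\big)$ in $p'(z_j) = \prod_{k\neq j}(z_j - z_k)$ and compute the argument of $z_j^{N-1}/p'(z_j)$. The factor $(2i)^{N-1}$ and the signs of the sines contribute only integer multiples of $\pi$, and after simplification the requirement that $z_j^{N-1}/p'(z_j) \in \R$ be positive collapses to the single congruence
\[
N\theta_j \equiv (N-1)\pi + \sum_{k=1}^N \theta_k \pmod{2\pi}.
\]
Thus every $z_j$ is an $N$-th root of the same unimodular number $w = (-1)^{N-1}\exp\!\big(i\sum_k \theta_k\big)$; since there are exactly $N$ such roots and the $z_j$ are distinct, $\{z_1,\ldots,z_N\}$ is precisely the set of $N$-th roots of $w$. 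This set is a rotation of $\{e^{i\widetilde\theta_\ell} : 1\le\ell\le N\}$ (indeed $e^{iN\widetilde\theta_\ell} = (-1)^{N-1}$ for each $\ell$), so $(\theta_1,\ldots,\theta_N) = (\widetilde\theta_{\sigma(1)} + \phi,\ldots,\widetilde\theta_{\sigma(N)} + \phi)$ for some $\sigma \in S_N$ and translation $\phi$, and the value computation above then yields \eqref{unitaryminimum}. The delicate part is the careful bookkeeping of $\arg(z_j - z_k)$ and the $(2i)^{N-1}$ factor modulo $2\pi$, and then using distinctness to upgrade ``each $z_j$ lies among $N$ equally spaced points'' to ``the $z_j$ are exactly those points''; it is also worth checking self-consistency, namely that $w = (-1)^{N-1}e^{i\sum_k\theta_k}$ is compatible with $\prod_j z_j = (-1)^{N+1}w$, which it is.
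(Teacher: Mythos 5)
Your proof is correct, and it takes a genuinely different route from the paper's. The paper obtains the lower bound $\varphi \geq 1$ analytically: it applies the AM--GM inequality to $\varphi$ to reduce to a product over all ordered pairs, rewrites the exponent with the substitution $\omega_{jk}$, invokes the strict convexity of $\log\cosec$ on $(0,\pi)$ via Jensen's inequality, and evaluates a telescoping sum $\sum_k \omega_{jk} = j\pi$; uniqueness then comes from the equality case of Jensen's inequality. Your argument instead works algebraically: after the change of variables $\ZZ_U'(\theta_j) = i z_j^{1-N}p'(z_j)$ (which is exact and correct, and gives $\varphi(U) = \sum_j |p'(z_j)|^{-1}$), the entire lower bound $\varphi\geq 1$ drops out of the Lagrange/residue identity $\sum_j z_j^{N-1}/p'(z_j) = 1$ and the triangle inequality. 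The equality analysis is then clean: equality forces every $z_j^{N-1}/p'(z_j)$ to be a positive real, and your bookkeeping of $\arg(z_j - z_k)$ and the $(2i)^{N-1}$ factor correctly collapses this to $N\theta_j \equiv (N-1)\pi + \sum_k\theta_k \pmod{2\pi}$, i.e.\ all $z_j$ are $N$-th roots of a common unimodular $w$; distinctness upgrades this to the full set of roots, and the consistency check $\prod_j z_j = (-1)^{N-1}w$ is correct. Your proof is shorter and more conceptual (the key identity is a one-liner), at the cost that it is tailored to the exponent $-1$: the paper's convexity argument generalises immediately to $\sum_j |\ZZ_U'(\theta_j)|^{2k}$ for any $k<0$ (as noted in the paper's final remark), whereas the triangle-inequality trick does not. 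Both proofs handle the degenerate repeated-eigenvalue case in the same way (there $\varphi = +\infty$ so it can be ignored). No gaps.
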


From this, we obtain the result for the subgroup of unitary symplectic matrices, namely Proposition \ref{unitarysympthm}, by setting $N = 2g$ and restricting our values of $\left(\theta_1, \ldots, \theta_{2g}\right)$ to be such that $\theta_{j + g} = - \theta_j$ with $0 \leq \theta_j \leq \pi$ for each $1 \leq j \leq g$; note that this restriction means that we lose the one-dimensional translation invariance modulo $2\pi$ of the variables of $\varphi$.

One can interpret Proposition \ref{unitarythm} via a geometric argument. If $z_1, \ldots, z_N$ are $N$ points on the unit circle in the complex plane, then we may consider the product of the chord lengths of chords from a single point $z_j$ to the other $N - 1$ points. We can then think of $\varphi$ as the sum of the inverses of these products indexed by the starting points $z_j$. Intuitively, we would expect the product of chord lengths to be largest when averaged over the starting points when the $N$-tuple of points on the unit circle are evenly spaced; consequently, we would expect $\varphi$ to be smallest at this same $N$-tuple.

\begin{proof}[Proof of Proposition \ref{unitarythm}]
We first prove that \eqref{unitaryminimum} holds. It suffices to prove this when $\sigma$ is the identity and $\phi = 0$, as $\varphi$ is invariant under permutations and one-dimensional translations of the variables. From \eqref{varphiunitarydiff}, we have that
	\[\varphi\left(\widetilde{\theta}_1, \ldots, \widetilde{\theta}_N\right) = \sum^{N}_{j = 1}{\prod^{N}_{\substack{k = 1 \\ k \neq j}}{\frac{1}{\left|1 - e^{2 \pi i (k - j) / N}\right|}}},
\]
as $\theta_j = (2j - 1 - N) \pi / N$. We obtain \eqref{unitaryminimum} by noting that for any $1 \leq j \leq N$,
	\[\prod^{N}_{\substack{k = 1 \\ k \neq j}}{\frac{1}{\left|1 - e^{2 \pi i (k - j) / N}\right|}} = \prod^{N - 1}_{k = 1}{\frac{1}{\left|1 - e^{2 \pi i k / N}\right|}} = \frac{1}{N},
\]
where the last step follows by taking $x = 1$ in the identity
	\[\sum^{N - 1}_{j = 0}{x^j} = \prod^{N - 1}_{k = 1}{\left(x - e^{2 \pi i k / N}\right)}.
\]
To prove that $\varphi\left(\theta_1, \ldots, \theta_N\right) \geq 1$ for all $\left(\theta_1, \ldots, \theta_N\right) \in [-\pi,\pi]^N$, we first note that we may assume without loss of generality that $- \pi < \theta_1 < \ldots < \theta_N < \pi$ and that $\theta_1 = - (N - 1) \pi / N$, as $\varphi$ is invariant under permutations and one-dimensional translations modulo $2 \pi$ of the variables. By applying the arithmetic mean--geometric mean inequality to \eqref{varphiunitarydiff}, we have that
	\[\varphi\left(\theta_1, \ldots, \theta_N\right) \geq \frac{N}{2^{N - 1}} \prod^{N}_{\substack{j, k = 1 \\ j \neq k}}{\left|\cosec\left(\frac{\theta_k - \theta_j}{2}\right)\right|^{1/N}} = \frac{N}{2^{N - 1}} \prod^{N - 1}_{j = 1}{\prod^{N}_{k = 1}{\left(\cosec \omega_{jk}\right)^{1/N}}},
\]
where $\omega_{jk} = \omega_{jk}\left(\theta_1, \ldots, \theta_N\right) \in (0, \pi)$ is given by
	\[\omega_{jk} = \begin{cases}
\displaystyle \frac{\theta_{j + k} - \theta_k}{2} & \text{if $j + k \leq N$,} \vspace{0.2cm}	\\
\displaystyle \pi - \frac{\theta_k - \theta_{j + k - N}}{2} & \text{if $j + k > N$,}
\end{cases}\]
and we have used the fact that $|\cosec \theta| = \cosec |\theta|$ and that $\cosec \left(\pi - \theta\right) = \cosec \theta$ for $- \pi < \theta < \pi$, $\theta \neq 0$. As
	\[\prod^{N}_{k = 1}{\left(\cosec \omega_{jk}\right)^{1/N} = \exp\left(\frac{1}{N} \sum^{N}_{k = 1}{\log \cosec \omega_{jk}}\right)},
\]
and as the function $f(\theta) = \log \cosec \theta$ is convex on the interval $(0,\pi)$, Jensen's inequality implies that
	\[\varphi\left(\theta_1, \ldots, \theta_N\right) \geq \frac{N}{2^{N - 1}} \prod^{N - 1}_{j = 1}{\cosec \left(\frac{1}{N} \sum^{N}_{k = 1}{\omega_{jk}}\right)}.
\]
Now
\begin{equation}\label{telescopingsum}
\sum^{N}_{k = 1}{\omega_{jk}} = j \pi,
\end{equation}
as this is a telescoping sum, and consequently
	\[\varphi\left(\theta_1, \ldots, \theta_N\right) \geq \frac{N}{2^{N - 1}} \prod^{N - 1}_{j = 1}{\cosec \left(\frac{j \pi}{N}\right)} = N \prod^{N - 1}_{j = 1}{\frac{1}{\left|1 - e^{2 \pi i j / N}\right|}} = 1.
\]
Finally, the function $f(\theta) = \log \cosec \theta$ is strictly convex on $(0,\pi)$, so equality from the use of Jensen's inequality can only occur if for each fixed $1 \leq j \leq N$,
	\[\omega_{jk} = \omega_{jk'}
\]
for all $1 \leq k,k' \leq N$, which, together with \eqref{telescopingsum}, implies that
	\[\omega_{jk} = \frac{j \pi}{N}
\]
for all $1 \leq j,k \leq N$. As we assumed that $\theta_1 = - (N - 1)\pi/N$, it follows that equality can only hold when
	\[\left(\theta_1, \ldots, \theta_N\right) = \left(\widetilde{\theta}_1, \ldots, \widetilde{\theta}_N\right) = \left(- \frac{(N - 1) \pi}{N}, - \frac{(N - 3) \pi}{N}, \ldots, \frac{(N - 1) \pi}{N}\right).
\qedhere\]
\end{proof}

It is worth noting that this method also works for the more general function
	\[\sum^{N}_{j = 1}{\left|{\ZZ_U}'\left(\theta_j\right)\right|^{2k}}
\]
where $k < 0$, which was studied by Hughes, Keating, and O'Connell \cite{Hughes} for its relation to discrete moments of the derivative of the Riemann zeta function. They calculated the asymptotics for large $N$ of the integral
	\[\int_{\U(N)}{\sum^{N}_{j = 1}{\left|{\ZZ_U}'\left(\theta_j\right)\right|^{2k}} \, d\mu_{\Haar}(U)},
\]
where $\mu_{\Haar}$ is the Haar measure on $\U(N)$, and used this to conjecture the growth in the variable $T$ of the sum
	\[J_k(T) = \sum_{0 < \gamma \leq T}{\left|\zeta'\left(1/2 + i \gamma\right)\right|^{2k}},
\]
where we are assuming the Riemann hypothesis and the simplicity of the zeroes of $\zeta(s)$. The method of proof of Proposition \ref{unitarythm} shows that for $k < 0$, the global minimum of
	\[\sum^{N}_{j = 1}{\left|{\ZZ_U}'\left(\theta_j\right)\right|^{2k}}
\]
is $2^{2k + 1}$, and occurs at the points $\left(\widetilde{\theta}_{\sigma(1)} + \phi, \ldots, \widetilde{\theta}_{\sigma(N)} + \phi\right)$.

\subsection*{Acknowledgements}
The author would like to thank Jim Borger for his helpful advice and support, Byungchul Cha for his useful discussions on his work, and the anonymous referee for their many suggestions and corrections. Most of all, the author is indebted to Ruixiang Zhang for his sketch of a simple proof of Proposition \ref{unitarythm}.


\begin{thebibliography}{88}

\bibitem{Bateman} P.\ T.\ Bateman, J.\ W.\ Brown, R.\ S.\ Hall, K.\ E.\ Kloss, and Rosemarie M.\ Stemmler, ``Linear Relations Connecting the Imaginary Parts of the Zeros of the Zeta Function'', in \textit{Computers in Number Theory}, editors A.\ O.\ L.\ Atkin and B.\ J.\ Birch, Academic Press, London, 1971, 11--19.

\bibitem{Best} D.\ G.\ Best and T.\ S.\ Trudgian, ``Linear Relations of Zeroes of the Zeta-Function'', preprint, \href{http://arxiv.org/abs/1209.3843}{arXiv:math.NT/1209.3843} (18 September 2012), 12 pages.

\bibitem{Cha} Byungchul Cha, ``The Summatory Function of the M\"{o}bius Function in Function Fields'', submitted for publication, \href{http://arxiv.org/abs/1008.4711}{arXiv:math.NT/1008.4711v2} (14 November 2011), 16 pages.

\bibitem{Chavdarov} Nick Chavdarov, ``The Generic Irreducibility of the Numerator of the Zeta Function in a Family of Curves with Large Monodromy'', \textit{Duke Mathematical Journal} \textbf{87} (1997), 151--180.

\bibitem{Howe} Everett W.\ Howe, Enric Nart, and Christophe Ritzenthaler, ``Jacobians in Isogeny Classes of Abelian Surfaces over Finite Fields'', \textit{Annales de l'Institut Fourier} \textbf{59} (2009), 239--289.

\bibitem{Hughes} C. P. Hughes, J. P. Keating, and Neil O'Connell, ``Random Matrix Theory and the Derivative of the Riemann Zeta-Function'', \textit{Proceedings of the Royal Society of London Serial A} \textbf{456} (2000), 2611--2627.

\bibitem{Humphries} Peter Humphries, ``On the Mertens Conjecture for Elliptic Curves over Finite Fields'', to appear in \textit{Bulletin of the Australian Mathematical Society}, \href{http://dx.doi.org/10.1017/S0004972712001116}{dx.doi.org/10.1017/S0004972712001116} (28 February 2013), 15 pages.

\bibitem{Ingham} A.\ E.\ Ingham, ``On Two Conjectures in the Theory of Numbers'', \textit{American Journal of Mathematics} \textbf{64} (1942), 313--319.

\bibitem{Katz} Nicholas M.\ Katz and Peter Sarnak, \textit{Random Matrices, Frobenius Eigenvalues, and Monodromy}, American Mathematical Society Colloquium Publications \textbf{45}, American Mathematical Society, Providence, 1999.

\bibitem{Kotnik} Tadej Kotnik and Herman te Riele, ``The Mertens Conjecture Revisited'', in \textit{Algorithmic Number Theory; 7th International Symposium, ANTS-VII; Berlin, Germany, July 2006; Proceedings}, editors Florian Hess, Sebastian Pauli, and Michael Pohst, Lecture Notes in Computer Science \textbf{4076}, Springer, Berlin, 2006, 156--167.

\bibitem{Kowalski} Emmanuel Kowalski, ``The Large Sieve, Monodromy, and Zeta Functions of Algebraic
Curves, 2: Independence of the Zeros'', \textit{International Mathematics Research Notices} (2008), Article ID rnn091, 57 pages.

\bibitem{Mertens} F.\ Mertens, ``\"{U}ber eine zahlentheoretische Funktion'', \textit{Sitzungs\-berichte der Kaiserlichen Akademie der Wissenschaften, Mathematisch-Natur\-wissen\-schaftliche Klasse, Abteilung 2a} \textbf{106} (1897), 761--830.

\bibitem{Ng} Nathan Ng, ``The Distribution of the Summatory Function of the M\"{o}bius Function'', \textit{Proceedings of the London Mathematical Society} \textbf{89} (2004), 361--389.

\bibitem{Odlyzko} A.\ M.\ Odlyzko and H.\ J.\ J.\ te Riele, ``Disproof of the Mertens Conjecture'', \textit{Journal f\"{u}r die Reine und Angewandte Mathematik} \textbf{357} (1985), 138--160.

\bibitem {Rosen} Michael Rosen, \textit{Number Theory in Function Fields}, Graduate Texts in Mathematics \textbf{210}, Springer, New York, 2002.

\bibitem{vonSterneck} R.\ D.\ von Sterneck, ``Neue empirische Daten \"{u}ber die zahlentheoretischen Funktion $\sigma(n)$'', in \textit{Proceedings of the Fifth International Congress of Mathematics, Cambridge, 22--28 August 1912, Volume 1}, editors E.\ W.\ Hobson and A.\ E.\ H.\ Love, Cambridge, 1913, 341--343.

\bibitem{Stieltjes} T.\ J.\ Stieltjes, Lettre \`{a} Hermite de 11 juillet 1885, Lettre \#79, in \textit{Correspondance d'Hermite et de Stieltjes, Tome 1}, editors B.\ Baillaud and H.\ Bourget, Paris, Gauthier--Villars, 1905, 160--164.

\bibitem{Waterhouse} William C.\ Waterhouse, ``Abelian Varieties over Finite Fields'', \textit{Annales Scientifiques de l'\'{E}cole Normale Sup\'{e}rieure, S\'{e}rie 4} \textbf{2} (1969), 521--560.

\bibitem{Weil} Andr\'{e} Weil, ``Sur les Fonctions Alg\'{e}briques \`{a} Corps de Constantes Fini'', \textit{Comptes Rendus de l'Acad\'{e}mie des Sciences de Paris, Serie I.\ Mathematique} \textbf{210} (1940), 592--594.

\end{thebibliography}
\end{document}